\newtheorem{theorem}{Theorem}[section]
\newtheorem{lemma}[theorem]{Lemma}
\newtheorem{conjecture}{Conjecture}
\newtheorem{remark}[theorem]{Remark}
\newcommand{\C}{\mbox{$\mathbb C$}}
\newcommand{\Q}{\mbox{$\mathbb Q$}}
\newcommand{\A}{\mbox{$\mathbb A$}}
\newcommand{\ol}{\overline}
\newcommand{\nind}{\noindent}
\author{Indranil Biswas, R.V. Gurjar and Sagar U. Kolte}
\begin{document}

\title{On the Zariski-Lipman conjecture for normal algebraic surfaces}
\maketitle

\begin{small}
Abstract. We consider the Zariski-Lipman Conjecture on free module of derivations for 
algebraic surfaces. Using the theory of non-complete algebraic surfaces, and some basic results about
ruled surfaces, we will prove the conjecture for several classes of affine and projective surfaces.
\end{small}

\vspace{5mm}
AMS Subject Classification. 14B05, 14E15, 14J17

\section*{Introduction}
\nind
The following is a well-known conjecture due to O. Zariski and J. Lipman:
\nind
\begin{conjecture}
Let $V$ be an algebraic variety over a field $k$ of characteristic 0, let $p$ be a (closed) point of $V$ 
and let $R$ be the local ring of $V$
at $p$. If the module of $k$-derivations $Der_k(R)$ is a free $R$-module then $V$ is smooth at $p$.\\

\end{conjecture}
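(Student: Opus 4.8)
We restrict to the surface case, which is the setting of this paper. The conjecture being local, the plan is to fix a $2$-dimensional normal local $k$-ring $R=\mathcal O_{V,p}$ (characteristic $0$) with $Der_k(R)$ free of rank two, and to deduce that $R$ is regular; when the structure theory of non-complete and ruled surfaces is needed we realize $(V,p)$ inside a normal affine or projective surface. One reduces at once to the case that $p$ is an \emph{isolated} singular point of $V$: if $\mathrm{Sing}(V)$ were a curve through $p$, passing to a general point of a component and completing produces a product $(\text{curve singularity})\times\mathbb{A}^1$, which sends the claim back to the $1$-dimensional case of the conjecture.

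Next I would extract two consequences of freeness. Writing $D_1,D_2$ for a free basis of $Der_k(R)=\mathrm{Hom}_R(\Omega_{R/k},R)$, the wedge $D_1\wedge D_2$ is a free generator of $\wedge^2 Der_k(R)$; this is a free rank-one sheaf restricting on the smooth locus to $\mathcal O(-K)$, hence it \emph{is} the reflexive sheaf $\mathcal O_V(-K_V)$ and $K_V$ is Cartier near $p$, i.e.\ $(V,p)$ is \emph{Gorenstein}. If $\eta$ is a local generator of the dualizing sheaf, the perfect pairing gives $\eta(D_1,D_2)=u\in R^{\times}$. The second ingredient is the lifting lemma on the minimal resolution $\pi\colon\tilde V\to V$ with reduced exceptional divisor $E=\sum_i E_i$: every $k$-derivation of $R$ lifts to a vector field on a neighborhood of $E$ that is regular (no poles along any $E_i$) and tangent to each $E_i$. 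Regularity is where minimality of $\pi$ and $\pi_*\mathcal O_{\tilde V}=\mathcal O_V$ enter; tangency I would obtain from the sub-lemma that a normal surface singularity carrying a derivation nonvanishing at the singular point is already regular (by the formal flow-box theorem such a ring splits off a formal line). Applied to $D_1,D_2$ this yields their lifts $\delta_1,\delta_2$, logarithmic vector fields for $(\tilde V,E)$ that are generically independent on $\tilde V\setminus E$.

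The heart of the argument is then a short local computation. Write $K_{\tilde V}=\pi^*K_V+\sum_i a_iE_i$; on the minimal resolution $a_i\le 0$, and Gorensteinness forces the $a_i$ to be integers. Near a general point of a fixed $E_i$, in local coordinates $(x,y)$ with $E_i=\{x=0\}$, one has $\pi^*\eta=x^{a_i}(\text{unit})\,dx\wedge dy$, while tangency of $\delta_1,\delta_2$ to $E_i$ makes their $\partial_x$-coefficients divisible by $x$; hence $(\pi^*\eta)(\delta_1,\delta_2)$ is divisible by $x^{a_i+1}$. But this function equals $\pi^*\big(\eta(D_1,D_2)\big)=\pi^*u$, which is regular and nowhere zero near $E$. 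If $a_i\ge 0$ the left side vanishes along $E_i$, and if $a_i\le -2$ it acquires a pole along $E_i$ --- both absurd. Thus either $E=\varnothing$, and $p$ is regular, or every $a_i=-1$; by adjunction the latter forces $E$ to be a single smooth elliptic curve or a cycle of rational curves, i.e.\ $(V,p)$ is a simple elliptic or a cusp singularity. These residual cases are where the theory of non-complete surfaces and of ruled surfaces does the work: under the hypotheses defining the several classes of surfaces treated here one shows such a configuration cannot occur --- for instance, the generic independence of $\delta_1,\delta_2$ yields an $\mathbb{A}^1$- or $\mathbb{G}_m$-action on a neighborhood of $E$, so the relevant open surface is ruled, and by the basic theory of ruled surfaces this is incompatible with an elliptic, or a rational-cycle, exceptional divisor.

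The step I expect to be the main obstacle is precisely this elimination of the simple elliptic and cusp cases --- equivalently, proving that the module of logarithmic vector fields attached to such an exceptional configuration cannot be free of rank two over $R$. It is not reached by the clean local computation above, and it is here that the hypotheses singling out the ``several classes'' are indispensable and that the structure theory of non-complete and ruled surfaces carries the proof. A secondary, more technical point is pinning down the lifting lemma --- regularity of the lifted field along every component of $E$, together with tangency --- in the exact generality required.
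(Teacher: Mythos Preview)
The statement you are addressing is the Zariski--Lipman \emph{Conjecture} itself; the paper does not prove it. What the paper proves is Theorem~0.1, a list of partial results organised by the logarithmic Kodaira dimension $\ol\kappa(V^0)$, and the Remark following the theorem explicitly says that the projective case with $\ol\kappa(V^0)=-\infty$, elliptic base, and only reduced feathers remains open. So there is no ``paper's own proof'' of this statement to compare with; at best one can compare strategies toward the partial results.

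Your strategy --- lift a free basis $D_1,D_2$ to logarithmic vector fields $\delta_1,\delta_2$ on the minimal resolution, contract against a generator $\eta$ of $\omega_V$, and read off the discrepancies --- is genuinely different from the paper's, which never computes discrepancies directly but instead shows $-K\ge Z$ on the minimal resolution (Lemma~3.1), deduces $\ol\kappa(V^0)\le 0$, and then runs the Zariski--Fujita/bark machinery and ruled-surface theory case by case. Your approach is cleaner when it works, and it recovers the paper's reduction in the $\ol\kappa=0$ case (the Claim in \S4 that $E$ is a smooth elliptic curve or a rational cycle).

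However, your local computation has a real gap in the step ``if $a_i\le -2$ it acquires a pole along $E_i$.'' In coordinates $(x,y)$ with $E_i=\{x=0\}$ you correctly get
\[
(\pi^*\eta)(\delta_1,\delta_2)\;=\;x^{a_i}\cdot(\text{unit})\cdot x\,g\;=\;x^{\,a_i+1}\cdot(\text{unit})\cdot g,
\qquad g\ \text{regular},
\]
and this equals the unit $\pi^*u$. From this one only extracts $\mathrm{ord}_{E_i}(g)=-(a_i+1)$, hence $a_i\le -1$; nothing forbids $g$ from vanishing along $E_i$, i.e.\ nothing forbids $\delta_1\wedge\delta_2$ from vanishing to order $>1$ there. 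Equivalently, $\delta_1,\delta_2$ generate $\pi_*T_{\tilde V}(-\log E)=Der_k(R)$ over $R$, but you have not shown they generate $T_{\tilde V}(-\log E)$ over $\mathcal O_{\tilde V}$ near $E$, which is what ``$\delta_1\wedge\delta_2$ has order exactly $1$'' would require. So your argument yields only $a_i\le -1$ --- the same inequality the paper obtains from $-K\ge Z$ --- and your claimed reduction to simple elliptic or cusp singularities (all $a_i=-1$) is not established. This matters: the paper's unresolved $\ol\kappa(V^0)=-\infty$ case is precisely where $K+E<0$, i.e.\ some $a_i<-1$, and the exceptional divisor can have many components; these configurations are not covered by your dichotomy.

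Finally, even in the cases where $E$ \emph{is} a smooth elliptic curve or a rational cycle, your one-line sketch (``generic independence of $\delta_1,\delta_2$ yields an $\mathbb A^1$- or $\mathbb G_m$-action, hence ruled, hence impossible'') is far from the actual arguments the paper uses: Wagreich's normal-bundle theorem plus the graded case (Theorem~1.2(1)) for the simple elliptic case, the Neumann--Wahl abelian cover for cusps, and in \S5 a detailed analysis of $\mathbb P^1$-bundles over curves including the computation $\dim H^0(W,T_f)\ge g$. You are right that this is where the difficulty lies, but the vector-field argument you suggest does not by itself produce a contradiction with an elliptic or cyclic exceptional configuration.
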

\nind
Lipman proved in \cite{Li} that if $Der_k(R)$ is $R$-free then $R$ is normal. In view of this, an equivalent 
formulation of this conjecture (obtained by shrinking $V$) is the following:\\

\nind
{\it Assume that $V$ is normal and the tangent bundle of the smooth locus of $V$ is a trivial bundle. Then $V$ is smooth.}\\ 

\nind
To see the equivalence of the two statements we can assume that $V$ is affine. It is well-known that $Der_k(R)$ is a 
reflexive $R$-module (being the dual of $\Omega_k(R)$). Hence if $R$ is normal then any element of $Der_k(R)$ is 
determined by its restriction to the smooth locus of $V$. If the tangent bundle of the smooth locus of $V$ is trivial 
then a free basis of the module of cross-sections of the tangent bundle of the smooth locus of $V$ gives a free basis 
of $Der_k(R)$ as an $R$-module. Similarly, any free basis
of $Der_k(R)$ as an $R$-module gives a trivialization of the tangent bundle on the smooth locus of $V$, if $V$ is 
a sufficiently small Zariski-open neighborhood of $p$.\\ 

\nind
Without the normality assumption this conjecture is false even for dimension $V=1$. In \cite{H} M. Hochster proved 
the conjecture for positively graded domains over $k$. G. Scheja and U. Storch proved the conjecture for 
hypersurfaces in \cite{S}. A similar proof was given by the second author around 1975 (unpublished). We mention here 
that by a result of M. Artin any normal germ of a complex variety $(V,p)$ with an
isolated singularity at $p$ is algebraic. J. Becker \cite{becker} showed that the
Zariski-Lipman conjecture is true if it can be shown to be true when the
singularities of $V$ are isolated. J. Steenbrink and D. Van Straten
\cite{steenbrink} settled the
conjecture for isolated singularities of dimension three or more. H. Flenner,
\cite{flenner}, proved the conjecture for non-isolated singularities of dimension
three or more, assuming that the singularities are of codimension at least 3.
Recently, few more cases of the conjecture have been proved.

(i) R. K\"{a}llstr\"{o}m \cite{K} proved the conjecture for all complete intersections. 

(ii) S. Druel \cite{D} proved the conjecture for local rings with log canonical singularities.

(iii) Independently, P. Graf \cite{G} also proved (ii) by a different method.

As far as we can see none of these results imply our results.

Given a smooth algebraic surface $V$ and a projective completion $\ol V$
of it such that $D\, :=\, \ol{V} - V$ is a divisor whose only singularities are nodes, 
the \emph{logarithmic Kodaira dimension} of $V$, 
denoted by $\ol{\kappa}(V)$, is defined as the supremum of the dimensions of the images of $\ol{V}$ 
under the rational maps defined by $H^0(\ol{V}, n(K+D))$, $n \geq 1$. If this linear system is 
trivial for all 
$n\geq 1$ then we define $\ol{\kappa}(V)=-\infty$ \cite{I}.

Our approach about this conjecture in this paper is global and it uses the theory of non-complete algebraic surfaces
developed by S. Iitaka, Y. Kawamata, T. Fujita, M. Miyanishi, T. Sugie, S. Tsunoda and other Japanese mathematicians. 
This theory has proved to be very effective in the solution of many problems about non-complete algebraic surfaces, 
and the arguments in this paper is just one more instance of this. 
We also use some results from the theory of vector bundles on smooth projective curves. Although some of our 
arguments are valid
assuming only projectivity of the module of derivations, for many arguments we need the full force of the assumption 
that the tangent bundle of the smooth locus is trivial. As can be seen from the somewhat involved proofs in 
this paper this stronger hypothesis is justified by the difficulty of the general conjecture. In this paper 
we verify the conjecture for all affine surfaces $V$ such that $\ol \kappa(V-Sing(V)) \leq 1$,
and prove the conjecture in almost all the cases when $V$ is projective. In particular, we prove that if $V$ 
is projective, the tangent bundle of $V-Sing(V)$ is trivial and $\overline\kappa(V-Sing(V))=0$ or $1$ 
then $V$ is smooth.

All the varieties in this paper will be assumed to be over an algebraically closed field $k$ of 
characteristic 
$0$. If $V$ is an algebraic surface then by $V^0$ we denote the
smooth locus of $V$.\\
We now state the results of this paper.
\begin{theorem}
Let $V$ be an algebraic surface defined over $k$. Assume that the
tangent bundle of $V^0$ is trivial. The following statements hold:

\begin{enumerate}

 \item If $V$ is an affine algebraic surface such that $\ol \kappa(V^0)
       \leq 1$, then $V$ is smooth.\\

 \item If $V$ is a projective surface, then $\ol \kappa(V^0) \leq 0$ and $V$
       has at most one singularity.\\

 \item If $V$ is a projective surface such that $\ol \kappa(V^0)=0$, then $V$ is smooth.\\


 \item Assume that $V$ is a projective surface such that $\ol \kappa(V^0)= -
       \infty$. Let $p$ be the unique singular point of $V$. Then there
       exists a resolution of singularities $\pi : W \rightarrow V$ such
       that there is a $\mathbb{P}^1$-fibration $W \rightarrow C$, where $C$ is
       a smooth projective curve. If the genus of $C$ is at least 2, then
       $V$ is smooth.\\

 \item With the notation in (4), if $C$ is a rational curve then $V$ is smooth.\\ 
 
 \item With the notation in (4), let $C$ be an elliptic curve. If
       at least one singular fiber of $W \rightarrow C$ has a non-reduced
       feather (defined later), then $V$ is smooth.\\

\end{enumerate}

\end{theorem}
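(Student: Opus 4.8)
The plan is to argue by contradiction: assume $V$ is singular and derive a contradiction from the triviality of the tangent bundle of $V^0$ together with the ellipticity of $C$ and the presence of a non-reduced feather. Keep the notation of (4): $\pi\colon W\to V$ is the resolution, $E=\pi^{-1}(p)$ is the connected exceptional curve, and $\pi$ restricts to an isomorphism $W\setminus E\xrightarrow{\,\sim\,}V^0$. Hence the tangent bundle of $W\setminus E$ is trivial, and I fix a global frame $\delta_1,\delta_2$ for it, i.e. a free basis of $\operatorname{Der}_k$ over $V^0$; I regard $\delta_1,\delta_2$ as nowhere-vanishing vector fields on $W\setminus E$, with $\delta_1\wedge\delta_2$ a nowhere-vanishing section of $\det T_W$ there. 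Let $f\colon W\to C$ be the $\mathbb P^1$-fibration, with $C$ elliptic.

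\emph{First I would show $df(\delta_i)$ is a constant.} Since $C$ is elliptic, $T_C\cong\mathcal O_C$; fix a nowhere-vanishing $\theta\in H^0(C,T_C)$, so $f^{*}\theta$ trivializes $f^{*}T_C$. Applying the differential of $f$, write $df(\delta_i)=\mu_i\,f^{*}\theta$ with $\mu_i\in H^0(W\setminus E,\mathcal O_W)=H^0(V^0,\mathcal O_{V^0})$; this last group equals $H^0(V,\mathcal O_V)=k$ because $V$ is normal (regular functions extend across the codimension-two point $p$) and projective. So $\mu_i\in k$. The two constants are not both zero: at a general point $q\in W\setminus E$ the morphism $f$ is smooth, so $\ker(df_q)$ is one-dimensional, and $\mu_1=\mu_2=0$ would force $\delta_1(q),\delta_2(q)\in\ker(df_q)$ to be linearly dependent, i.e. $\delta_1\wedge\delta_2$ to vanish at $q$, a contradiction. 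After renaming and rescaling $\delta_1$, I may assume $df(\delta_1)=f^{*}\theta$; in particular $df(\delta_1)$ is a nowhere-vanishing section of $f^{*}T_C$ over $W\setminus E$.

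\emph{Then I would feed in the non-reduced feather.} A non-reduced feather contained in a singular fibre $F_s$ supplies a component $\Phi$ of $F_s$ with $\Phi\not\subseteq E$ and $m:=\operatorname{mult}_\Phi(F_s)\ge 2$. Choose a general point $q\in\Phi$; then $q\in W\setminus E$, and near $q$ the fibre equals $m\Phi$ with no other component of $F_s$ passing through $q$. In local coordinates $(u,v)$ at $q$ with $\Phi=\{u=0\}$ and a local coordinate $t$ on $C$ at $f(q)$, we get $f^{*}t=u^{m}w$ with $w$ a unit at $q$; writing $\delta_1=A\,\partial_u+B\,\partial_v$ with $A,B\in\mathcal O_{W,q}$, the section $df(\delta_1)$, read against the frame $f^{*}(\partial_t)$, is the function $\delta_1(f^{*}t)=m\,u^{m-1}Aw+u^{m}(Aw_u+Bw_v)$, which vanishes at $q$ since $m-1\ge 1$. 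But $df(\delta_1)=f^{*}\theta$ is nowhere vanishing on $W\setminus E$, so $\delta_1(f^{*}t)(q)\neq 0$ --- a contradiction. Therefore $V$ has no singular point, i.e. $V$ is smooth.

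\emph{On the main obstacle.} The computation itself is short; its content is that a nowhere-vanishing global vector field on $W\setminus E$ whose image under $df$ is a nowhere-vanishing section of $f^{*}T_C$ cannot coexist with a fibre component of multiplicity $\ge 2$ meeting $W\setminus E$. I expect the only real work to be in matching this with the paper's set-up: (a) confirming that the surface carrying the feathers is indeed a completion of $V^0$ whose complementary curve is $E$, so that the frame $\delta_1,\delta_2$ is available on its complement; and (b) reading off from the forthcoming definition of a \emph{non-reduced} feather exactly the property used above, namely the existence of a fibre component of multiplicity $\ge 2$ not contained in $E$. This also clarifies why the elliptic case needs an extra hypothesis at all: when $C$ is rational or has genus $\ge 2$ one obtains a contradiction directly from $H^0(W\setminus E,f^{*}T_C)$, whereas for $C$ elliptic $df(\delta_i)$ is only forced to be a constant, and the non-reduced feather is precisely what turns that into a contradiction.
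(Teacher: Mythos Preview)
Your argument for (6) is correct and is genuinely different from the paper's proof. The paper does not use the differential $df$ at all; instead it proceeds by base change. Concretely, the paper first treats the case $g(C)\geq 2$ in two steps: when every singular fibre has a reduced feather, a contraction process reduces $E$ to a single section $S$ with $S^2<0$, and then a vector-bundle computation (their Lemma~5.2) shows $T_{W\setminus S}$ cannot be trivial; when some fibre has only non-reduced feathers, they invoke the solution of Fenchel's conjecture to build a Galois cover $C'\to C$ whose ramification indices match the feather multiplicities, so that the normalized fibre product has reduced feathers and one is back in the first step. For (6) the paper says only that a similar base-change trick works over an elliptic $C$, producing a cover to which the earlier argument applies.

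Your route is far more direct for this particular assertion: it needs nothing beyond the triviality of $T_C$, the projectivity and normality of $V$ (giving $\mathcal O(W\setminus E)=k$), and the elementary local fact that $df$ vanishes along a fibre component of multiplicity $\geq 2$. In particular you avoid Fenchel's conjecture, the contraction process, and the ruled-surface/vector-bundle analysis of Lemma~5.2 entirely. The paper's approach, on the other hand, is part of a uniform strategy that also disposes of the $g(C)\geq 2$ case, where your constant-coefficient trick is unavailable since $T_C$ is no longer trivial (and your closing remark that for $g\neq 1$ one gets a contradiction ``directly from $H^0(W\setminus E,f^*T_C)$'' would need more justification: sections over $W\setminus E$ need not extend across the divisor $E$). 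As for the two points you flagged: the paper's set-up does give $W\setminus E\cong V^0$, and a \emph{non-reduced feather} is defined in the paper exactly as you use it, namely a component of a singular fibre lying outside $E$ whose multiplicity in the fibre exceeds $1$.
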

{\it Remark.} We have not been able settle the Zariski-Lipman Conjecture completely for 
projective case when $\ol\kappa(V^0)=-\infty$. If the exceptional divisor is a smooth elliptic curve then 
we have proved the conjecture using arguments from the theory of rank $2$ vector bundles. Similarly, if the
${\mathbb P}^1$-fibration has at least one non-reduced feather then we have proved the conjecture. But if the base 
of the ${\mathbb P}^1$-fibration is an elliptic curve and the exceptional divisor has many irreducible 
components, and only reduced feathers, then we have not been able to settle the conjecture. We believe that 
this case in rare and hope to be able to settle it in future.

{\it Acknowledgements.} The authors would like to thank the referee for reading the paper carefully and
making useful suggestions for improving the presentation.

\section{Preliminaries}

Let $X$ be a smooth projective surface. For a (possibly reducible) reduced curve $A$ on $X$, by 
a \emph{component}
 of $A$ we mean an irreducible component of $A$.
An irreducible smooth rational curve $A$ on $X$ with $A^2=-n$ is called a $(-n)$-curve. We will 
mainly use this terminology when $n>0$.\\

We also recall that a reduced effective divisor $D= D_1+D_2+\ldots +D_r$ on a smooth surface is called a
divisor with simple normal crossings (SNC, for short) if every component  $D_i$
is smooth and the only singularities of $D$ are simple nodes.\\

Let $f:X\rightarrow C$ be a morphism from a smooth surface $X$ onto a smooth curve $C$. Let 
$\sum_i a_iA_i$ be a scheme-theoretic fiber of $f$. Then G.C.D. $\{a_1,a_2,\ldots\}$ is called 
the {\it multiplicity} of the fiber. The integer $a_i$ is called the {\it multiplicity} of $A_i$ in the fiber.  

With $f,X,C$ as above, an irreducible curve $S\subset X$ is called a {\it cross-section} of $f$ if $f:S\rightarrow C$ 
is an isomorphism.\\

We will implicitly use the following easy consequence of Hodge Index Theorem (called Zariski's Lemma).\\

{\it Let $\pi:X\to B$ be a morphism on a smooth projective surface $X$ onto a smooth projective curve $B$ such
that a general fiber of $\pi$ is irreducible. Let $F_0:\Sigma a_iC_i$ be a singular fiber of $\pi$ (i.e. $F_0$
is scheme-theoretically not isomorphic to a general fiber of $\pi$). If $F_0$ is not irreducible then the intersection
form on a union of any proper subset of set of the components $\{C_i\}$ is negative definite. The intersection form on
$\{C_i\}$ has exactly one eigenvalue which is equal to $0$. In fact, the only rational divisors supported on $\{C_i\}$
with self-intersection $0$ are rational multiples of $F_0$.}\\

In our proofs some well-known properties of a singular fiber of a $\mathbb{P}^1$-fibration 
on a smooth projective surface will be implicitly used. We state this result for the sake of 
completeness. See \cite[Chapter I, \S~ 4.4.1]{M81}.

\begin{lemma} 
Let $f:X\rightarrow C$ be a $\mathbb{P}^1$-fibration on a smooth projective surface $X$ onto a 
smooth projective curve $C$. Let $F:=a_1A_1+\ldots+a_rA_r$ be a scheme-theoretic fiber of $f$, 
where $A_i$ are the components of $F$. Then we have:
\begin{enumerate}

  \item G.C.D. $(a_1,\ldots,a_r)=1$.
  \item The reduced divisor $F_{red}$ is SNC. Further, $F$ is a tree of smooth rational curves.
  \item At least one $A_i$ is a $(-1)$-curve. If $a_i=1$ for some $i$, then there is a $(-1)$-curve 
$A_j$ in $F$ such 
       that $j\neq i$. Any $(-1)$-curve in $F$ meets at most two other components of $F$.
\end{enumerate}
\end{lemma}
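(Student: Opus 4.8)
\emph{Proof strategy.} The plan is to reduce all three assertions to the trivial case of a smooth fiber, using repeatedly that contracting a $(-1)$-curve contained in a fiber again produces a $\mathbb{P}^1$-fibration; the engine is a short adjunction argument showing that every reducible fiber contains such a $(-1)$-curve. Write $K$ for a canonical divisor of $X$ and fix the fiber $F=\sum_{i=1}^{r}a_iA_i$ over a point $t\in C$. First I would record the numerical data shared by all fibers: since the fibers of $f$ are algebraically equivalent and the general fiber $F_0\cong\mathbb{P}^1$, adjunction on $F_0$ gives $K\cdot F=K\cdot F_0=-2$, while $F^2=0$; hence $2p_a(F)-2=F^2+K\cdot F=-2$, so $p_a(F)=0$, and $F$ is connected by Stein factorization. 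Part (1) then follows: writing $d=\gcd(a_1,\dots,a_r)$ and $F=dF_1$ with $F_1$ integral and effective, one has $F_1^2=0$ and $d\,(K\cdot F_1)=-2$, so $d\in\{1,2\}$, and $d=2$ is impossible since it would force $p_a(F_1)=\tfrac12(F_1^2+K\cdot F_1)+1=\tfrac12$, not an integer.

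Next I would prove the first assertion of (3) and, in passing, pin down irreducible fibers. If $F=a_1A_1$ is irreducible, then $a_1=1$ by (1) and $A_1^2=0$, so $A_1=F$ is a smooth rational curve. If $F$ is reducible, then $A_i^2<0$ for every component (immediate from Zariski's Lemma, or from $a_iA_i^2=-\sum_{j\neq i}a_j(A_i\cdot A_j)\le 0$ together with connectedness of $F$). Now $-2=K\cdot F=\sum_i a_i(K\cdot A_i)$ forces $K\cdot A_{i_0}\le -1$ for some $i_0$; combining adjunction $2p_a(A_{i_0})-2=A_{i_0}^2+K\cdot A_{i_0}$ with $p_a(A_{i_0})\ge 0$, $A_{i_0}^2\le -1$ and $K\cdot A_{i_0}\le -1$ forces all three inequalities to be equalities, so $A_{i_0}$ is a $(-1)$-curve.

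For (2) I would reduce to the smooth fiber. Contracting a $(-1)$-curve lying in a fiber again gives a smooth projective surface mapping to $C$ with general fiber $\mathbb{P}^1$, i.e. again a $\mathbb{P}^1$-fibration; by the previous paragraph we may keep contracting $(-1)$-curves inside the fiber over $t$ until that fiber becomes irreducible, hence a smooth rational curve of self-intersection $0$. Reversing this exhibits $F$ as the total transform of a smooth $\mathbb{P}^1$ under a sequence of blow-ups of points over $t$, and induction on the number of blow-ups yields (2): the centre of each blow-up is, by the SNC property already established at the previous stage, either a smooth point of the reduced fiber --- so the new exceptional curve becomes a leaf of the dual graph attached to the one component through the centre --- or a node, in which case the exceptional curve subdivides the corresponding edge and separates the two branches; either way the reduced fiber stays SNC and its dual graph stays a tree of smooth rational curves. (The strict transform of the original $\mathbb{P}^1$ always keeps multiplicity $1$, which reproves (1).)

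Finally I would finish (3). If a $(-1)$-curve $A_i\subset F$ meets exactly the components $A_j$, $j\in S$, then $A_i\cdot F=0$ and the SNC property give $a_i=\sum_{j\in S}a_j$; if $|S|\ge 3$, contracting $A_i$ would produce a $\mathbb{P}^1$-fibration one of whose fibers has at least three components through a single point, contradicting (2) for that surface, so $|S|\le 2$. For the last statement, note that by the first assertion of (3) a reducible $F$ has some $(-1)$-curve, so we may assume its only one is $A_i$ itself and that $a_i=1$, and derive a contradiction. Then $\sum_{j\in S}a_j=1$ with $|S|\le 2$ forces $S=\{j\}$ and $a_j=1$, so $A_i$ is a multiplicity-one leaf; contracting it gives a $\mathbb{P}^1$-fibration whose fiber $\bar F$ has one fewer component, still reducible when $r\ge 3$. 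Every component of $\bar F$ other than $\bar A_j$ keeps its self-intersection, so --- $A_i$ being the unique $(-1)$-curve of $F$ --- the curve $\bar A_j$ is the only candidate for a $(-1)$-curve of $\bar F$, hence (by the first assertion of (3) applied to $\bar F$) it is one, and since it still has multiplicity $1$, induction on the number of components applied to $\bar F$ produces a $(-1)$-curve of $\bar F$ distinct from $\bar A_j$ --- impossible; the base case $r=2$ is a one-line computation showing that the component other than $A_i$ is itself a $(-1)$-curve. The step I expect to demand the most care is this closing induction: the idea is light but one must track multiplicities and self-intersections carefully through each contraction. Conceptually everything hinges on the observation that contracting a $(-1)$-curve inside a fiber keeps one within the class of $\mathbb{P}^1$-fibrations, which is exactly what permits the reduction to a smooth fiber.
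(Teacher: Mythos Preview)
The paper does not actually prove this lemma: it is stated ``for the sake of completeness'' with a reference to Miyanishi's \emph{Theory of Non-complete Algebraic Surfaces} \cite[Chapter I, \S~4.4.1]{M81}, and no argument is given. Your proof is correct and is essentially the standard one found in that reference: use adjunction and Zariski's Lemma to locate a $(-1)$-curve in any reducible fiber, contract down to a smooth $\mathbb{P}^1$, and read off the SNC tree structure from the sequence of blow-ups; the remaining parts of (3) follow by bookkeeping under a single contraction and a short induction on the number of components. There is nothing to compare against beyond noting that your write-up matches the classical proof the paper is citing.
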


In connection with a $\mathbb{P}^1$-fibration we will use another notion, viz. that of 
a {\it feather}.    
Let $W$ be a smooth quasi-projective surface with an open embedding into a smooth projective surface $X$ such that
$D:=X-W$ is a connected SNC divisor. Suppose that there is a $\mathbb{P}^1$-fibration $f:X\rightarrow C$ which restricts 
to an $\A^1$-fibration on $W$. Then $D$ contains a unique component, say $A_1$, which meets every fiber of $f$ 
exactly once. This is because a general fiber of the morphism $f$ restricted to $W$ is isomorphic to $\A^1$ 
which has only one place at infinity and it is the point of intersection of $A_1$ and the closure of this $\A^1$.\\

\nind
Let $F$ be a singular fiber of $f$. Since $A_1$ is the only component of $D$ which does not lie in a fiber
of $f$, from connectedness of $D$ we see that $D\cap F$ is also connected.\\
Assume also that every component of $F$ intersects $D$ (this happens, for example, if $W$ is affine) and let $A_i$ be a 
component of $F$ which is not contained in $D$. We claim that $D\cap A_i$ is a single point. If $A_i$ intersected $D$ 
in more than one point then connectedness of $D\cap F$ would imply that $F$ is not a tree of $\mathbb{P}^1$'s. 
We call such a component $A_i$ of $F$ a {\it feather}.\\ 

We will state the following result which combines the works due to Hochster
\cite{H} (see also, \cite{P}), Scheja-Storch \cite{S} and K{\"a}llstr{\"o}m \cite{K}.

\begin{theorem} \mbox{}
\begin{enumerate}
 \item Let $V$ be an affine algebraic variety such that the algebraic tangent
bundle of
       $V^0$ is trivial. If the coordinate ring of $V$ is a positively graded
       domain, then $V$ is smooth.
  \item If $V$ is a complete-intersection singularity such that the tangent bundle of $V^0$ is trivial, 
       then $V$ is smooth. 

   \item If $V$ is a non-smooth algebraic surface with at most rational singularities, then
       the tangent bundle of $V^0$ is not trivial.
\end{enumerate}

\end{theorem}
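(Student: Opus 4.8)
**Proof proposal for Theorem 1.3(3): a non-smooth algebraic surface with at most rational singularities has non-trivial tangent bundle on its smooth locus.**

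The plan is to argue by contradiction: suppose $V$ is a non-smooth surface with at most rational singularities and the tangent bundle of $V^0$ is trivial. First I would reduce to a local, and then to a convenient global, situation. Since the statement is étale-local at a singular point $p$, and since a rational surface singularity is isolated (normality plus rationality in dimension two forces the singular set to be finite), I may take $V$ to be normal with a single rational singularity at $p$. The key is to manufacture a contradiction from the \emph{triviality} of $T_{V^0}$, which in particular gives a nowhere-vanishing (indeed, globally framed) 2-form and vector fields on $V^0$; the idea is to transport this structure to a resolution $\pi\colon W\to V$ and exploit the rationality of the singularity to control what happens along the exceptional divisor $E=\pi^{-1}(p)$.

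The main computational step is a discrepancy/adjunction argument on the minimal resolution. Because the singularity is rational, it is in particular log terminal, so on the minimal resolution $\pi\colon W\to V$ one has $K_W = \pi^*K_V + \sum a_i E_i$ with all discrepancies $a_i > -1$ (and $a_i\le 0$ since the resolution is minimal, with at least one $a_i<0$ as $p$ is a genuine singularity — here one also uses that rational double points would already be excluded or handled, since for ADE points $a_i=0$ and one needs the sharper input that $T_{V^0}$ trivial forces even these to be smooth, which is where triviality rather than mere local freeness is essential). Triviality of $T_{V^0}$ means $\Omega^1_{V^0}$ is trivial, hence $\omega_{V^0}=\det\Omega^1_{V^0}$ is trivial, so $K_V$ is trivial (Cartier) near $p$ — this already shows $V$ is Gorenstein at $p$, forcing the rational singularity to be a rational double point (an ADE / Du Val singularity). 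So the real content is: \emph{if $p$ is a rational double point, then $T_{V^0}$ cannot be trivial near $p$.}

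For the rational-double-point case, the plan is to pull back the two global vector fields $\theta_1,\theta_2$ that frame $T_{V^0}$. A derivation of $\mathcal{O}_V$ lifts to a derivation of $\mathcal{O}_W$ along the birational morphism (vector fields extend across the exceptional locus of a resolution in dimension two, since $W^0\cong V^0$ and $W$ is smooth, using that $\pi_*T_W$ is reflexive and agrees with $T_V$ on the normal locus), so we get vector fields $\tilde\theta_1,\tilde\theta_2$ on $W$. Their wedge $\tilde\theta_1\wedge\tilde\theta_2$ is a section of $\det T_W = -K_W = \pi^*(-K_V)-\sum a_iE_i = -\sum a_i E_i$ (using $K_V$ trivial near $p$), which for a Du Val point is $\mathcal{O}_W$ near $E$ since all $a_i=0$; but on $V^0$ this section is nowhere zero, so $\tilde\theta_1\wedge\tilde\theta_2$ is a nowhere-zero section near $E$ as well, i.e. $\tilde\theta_1,\tilde\theta_2$ frame $T_W$ in a neighborhood of $E$. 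This is the crux: a neighborhood of the exceptional $E$ of a Du Val resolution then has trivial tangent bundle, forcing $E$ (a connected union of $(-2)$-curves) to consume the wrong amount of $c_1$ and $c_2$. Concretely, I would compute $c_2$ of such a neighborhood, or equivalently apply Noether/topological Euler-characteristic bookkeeping: the exceptional configuration of a Du Val singularity has positive topological Euler characteristic (it is a tree of $\mathbb{P}^1$'s), whereas a surface with trivial tangent bundle has $c_2=0$, and the local contribution along $E$ to $c_2$ is precisely this positive Euler characteristic — a contradiction. The hard part will be making the last step rigorous: one does not literally have a compact surface with trivial tangent bundle, so the Euler-characteristic/$c_2$ argument must be localized, e.g. by using the Gauss–Bonnet-type formula for the logarithmic or compactly-supported Euler characteristic, or by invoking that a trivial rank-$2$ bundle on $W$ restricted to a neighborhood of $E$ gives two everywhere-independent sections of $T_W|_E$, which is impossible because $T_W|_E$ has a negative-degree quotient or summand coming from the normal bundles $\mathcal{O}_{\mathbb{P}^1}(-2)$ of the components of $E$ — any global vector field on $W$ is forced to be tangent to $E$ and to vanish at the nodes of $E$, so $\tilde\theta_1,\tilde\theta_2$ must both vanish somewhere on $E$, contradicting that they frame $T_W$ there. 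I expect the bookkeeping at the nodes of $E$, and the verification that a vector field framing $T_W$ near $E$ is genuinely impossible, to be the main obstacle; everything preceding it is formal.
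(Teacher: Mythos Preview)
Your reduction is exactly the paper's: triviality of $T_{V^0}$ forces $\omega_{V^0}$ trivial, hence the singularity is Gorenstein, and a Gorenstein rational surface singularity is a rational double point. At that point, however, the paper does not attempt a direct argument on the resolution. It simply observes that a rational double point is a hypersurface singularity and invokes part~(2) of the same theorem (the Scheja--Storch/K\"allstr\"om result for complete intersections). So the entire content of part~(3), as the paper presents it, is the one-line deduction ``Gorenstein $+$ rational $\Rightarrow$ Du Val $\Rightarrow$ hypersurface $\Rightarrow$ apply~(2)''.

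Your alternative route---lifting the frame to the minimal resolution $W$ and arguing that $T_W$ cannot be trivial near the exceptional set---is genuinely different and can be made to work, but you are making it harder than necessary and leaving the endgame vague. Once you know (as you correctly argue, using that Du Val discrepancies vanish) that $\tilde\theta_1\wedge\tilde\theta_2$ is nowhere zero near $E$, you have $T_W$ trivial in a neighborhood of $E$, hence $T_W|_{E_i}\cong\mathcal{O}_{\mathbb{P}^1}^{\oplus 2}$ for each component $E_i$. But the normal-bundle sequence $0\to T_{E_i}\to T_W|_{E_i}\to N_{E_i/W}\to 0$ reads $0\to\mathcal{O}(2)\to T_W|_{E_i}\to\mathcal{O}(-2)\to 0$, and since $\mathrm{Ext}^1(\mathcal{O}(-2),\mathcal{O}(2))=H^1(\mathbb{P}^1,\mathcal{O}(4))=0$ this splits, giving $T_W|_{E_i}\cong\mathcal{O}(2)\oplus\mathcal{O}(-2)\not\cong\mathcal{O}^{\oplus 2}$. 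That is already the contradiction; no Euler-characteristic localization or node analysis is needed. So your approach is correct but longer, and its payoff is independence from the complete-intersection result, whereas the paper's approach is a two-line reduction to it.
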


To see part (3) of Theorem 1.2 note that if the algebraic tangent bundle of
$V^0$ is trivial and $V$ is a surface, then the singularities are Gorenstein.
Gorenstein rational singularities are rational double points. This implies that
the singularities are hypersurface singularities. By Theorem 1.2 (2) $V$ is smooth.

We will frequently reduce our problem to cases that can be answered by the
above theorem.

P. Wagreich \cite[\S~1.8]{W} defined the notion of an \emph{elliptic singularity}.\\
Let $p$ be a normal singularity of a two dimensional analytic space $V$
with $\mathcal{O}_{V,p}$ the local ring at $p$. Let $\pi : M \rightarrow V$
be the minimal resolution of $p$. Let $E=\pi^{-1}(p)$ be the exceptional
divisor. By $P_a(D)$ we denote the arithmetic genus of the divisor $D$, defined
as $(D^2+D \cdot K)/2+1$. Then the
arithmetic genus of $\mathcal{O}_{V,p}$, denoted by $P_a(\mathcal{O}_{V,p})$, is defined as $max~\{P_a(D)\}$, 
where $D$ ranges over all positive divisors whose support is contained in $E$. If the arithmetic genus is $1$ then 
the singularity is called an \emph{elliptic singularity}.\\

\nind
Let $V$ be a
smooth quasi-projective surface and let $X$ be a smooth projective completion of $V$
such that $D:=X-V$ is SNC.

We will need several notions from the theory of Zariski-Fujita decomposition of pseudo-effective divisors 
\cite[\S~6]{F}.

Recall that an integral divisor $\Delta$ on a smooth projective surface $X$ is \emph{pseudo-effective} if 
$\Delta\cdot H\geq 0$ for every ample divisor $H$ on $X$. Let $K$ denote the canonical divisor of $X$.
If $\ol\kappa(X-D)\geq 0$ then some multiple of $|K+D|$ is an effective divisor. Hence in this case $K+D$ is
pseudo-effective. This remark will be implicitly used in what follows.

Suppose that $K+D$ is pseudo-effective. Then there is a unique decomposition $K+D\approx P+N$, where 
$\approx$ denotes numerical equivalence, $P$ is a nef $\Q$-divisor, $N$ is an effective $\Q$-divisor such that the 
intersection form on the components of $N$ is negative definite, and $P\cdot D_i=0$ for any component $D_i$ in 
the support of $N$. 

Let $\ol S$ be a smooth complete surface and let $D$ be an effective reduced divisor on it. For any irreducible 
component $Y$ of $D$, we denote $Y \cdot  (D-Y)$ by $\beta(Y)$. This $\beta(Y)$
is called the \emph{branching number} of $Y$ in $D$.\\

$Y$ is called a \emph{tip} of $D$ if $\beta(Y)=1$. It is called a \emph{rational tip} if $Y = \mathbb{P}^1$. A sequence 
$C_1, \cdots , C_r$ of components of $D$ is called a \emph{rational twig} $T$ of $D$ if each $C_i$ is a rational 
normal curve, $\beta(C_1)=1$, $\beta(C_j)=2$ and $C_{j-1} \cdot C_{j}=1$ for $2 \geq j \geq r$.
The curve $C_1$ is called the 
tip of this twig $T$.\\

Since $\beta(C_r)=2$, there is a component $C$ of $D$, not in $T$, such that $C_r \cdot C=1$. If $C$ is a rational 
tip of $D$, then $T'=T+C$ is a connected component of $D$ and it will be called a \emph{rational club} of $D$.\\

When the above $C$ is isomorphic to $\mathbb{P}^1$ and $\beta(C)=2$, then $T'$ is a rational twig of $D$. Otherwise, $T$ 
is called a \emph{maximal rational twig} of $D$ and $C$ is called the \emph{branching component} of $T$.\\

If the intersection form on the components of $T$ is negative definite, then $T$ is said to be \emph{contractible}.
Assume that $T$ is a maximal rational contractible twig of $D$. The element $N \in \mathbb{Q}(T)$ such that 
$N \cdot C_1=-1$ and $N \cdot C_j=0$ for $j \geq 2$ is called the \emph{bark} of $T$. If $T'= C_1+
\ldots +C_r+C$ is a 
contractible rational club of $D$, the bark of $T'$ is defined to be the $\mathbb{Q}$-divisor $N'$ in
$\mathbb{Q}(T')$ 
such that $N' \cdot C_1= N' \cdot C= -1$ and $N' \cdot C_j=0$ for $2 \leq j \leq r$. For a connected component $Y$ 
of $D$ which is a rational normal curve, its bark is defined to be $2(-Y^2)^{-1}Y$.\\

Fujita has proved that if $\ol\kappa(V^0)\geq 0$ then all the rational twigs of $D$ are contractible 
\cite[Lemma 6.13]{F}. Let $Bk(D)$ denote the sum of all the barks of maximal rational contractible twigs of $D$. 
In view of Theorem 1.2 (3), 
we can assume that none of the singular points of V is a rational singular point. In particular, in Fujita's
terminology no singular point is an $``$abnormal rational club''. Hence we do not have to consider the 
$``$thicker bark'' of D.\\

Let $P+N$ be the Zariski-Fujita decomposition of $D$. We now state a result due to
Fujita, \cite[Lemma 8.7]{F}, about the divisor $D$ when $\ol \kappa(V^0)=0$. 

\begin{lemma}
Assume that $\ol\kappa(V^0)=0$. Assume also that any $(-1)$-curve in $D$ meets at least three other 
components of $D$. If $Bk(D)$ = $N$, then any connected component of $D$ is one
of the following
\begin{enumerate}

 \item A minimal resolution of a quotient singular point.
 \item A tree of $\mathbb{P}^1$'s with exactly two branch points such that the
branch points are connected by a (possibly empty) linear chain of
$\mathbb{P}^1$'s and each branch point meets exactly two other (-2)-curves
which are tips of $D$.
 \item A simple loop of $\mathbb{P}^1$'s.
 \item A tree of $\mathbb{P}^1$'s with a unique branch point which meets three
linear trees defining cyclic quotient singular points at one of their end
points. Further, the absolute values $d_1,d_2,d_3$ of the determinants of the
three trees satisfy $\Sigma 1/d_i=1$. 
 \item A tree of five $\mathbb{P}^1$'s with a unique branch point which
intersects the other four curves transversally in one point each, and such that
the four curves are all (-2) curves.
 \item A smooth elliptic curve.
\end{enumerate}

\end{lemma}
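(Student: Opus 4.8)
The plan is to reduce to Fujita's structure theory for the Zariski–Fujita decomposition of $K+D$ and then extract the combinatorial consequences of the hypothesis $Bk(D)=N$. First I would recall that since $\overline\kappa(V^0)=0$, some multiple of $|K+D|$ is effective, so $K+D$ is pseudo-effective and admits the decomposition $K+D\approx P+N$ with $P$ nef, $N$ effective with negative-definite support, and $P\cdot D_i=0$ on $\mathrm{Supp}(N)$; moreover $\overline\kappa=0$ forces $P$ to be numerically trivial (if $P\not\equiv 0$ then $P$ being nef with $P^2\ge 0$ would push the Iitaka dimension up, or more precisely $\kappa(X,K+D)=\kappa(X,P)$ and a nonzero nef $P$ with $(K+D)\cdot P\ge 0$ gives $\overline\kappa\ge 1$; this is exactly Fujita's setup). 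Hence $K+D\approx N$, an effective $\mathbb{Q}$-divisor supported on a negative-definite union of components of $D$.

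Next I would invoke Fujita's Lemma 6.13 (quoted in the excerpt): all rational twigs of $D$ are contractible, so $Bk(D)$ is defined, and by Theorem 1.2(3) we may assume no singular point is rational, so there are no "abnormal rational clubs" and no thicker bark is needed. The key step is then to compare the two $\mathbb{Q}$-divisors $N$ (from the Zariski decomposition) and $Bk(D)$ (the sum of barks of maximal rational contractible twigs). In general $Bk(D)\le N$ componentwise (the barks always appear inside $N$), and Fujita analyzes the "extra" part $N-Bk(D)$. The hypothesis $Bk(D)=N$ says this extra part vanishes, which is the crucial rigidity: it forces $K+D\approx Bk(D)$, and the barks are supported precisely on the maximal rational contractible twigs and rational clubs. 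Together with the extra hypothesis that every $(-1)$-curve in $D$ meets at least three other components (so no $(-1)$-curve sits in or at the end of a twig, meaning $(-1)$-curves can only be branch components), this pins down the local structure at every branch point and along every chain.

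From here the argument becomes bookkeeping on the dual graph of each connected component $Y$ of $D$. Writing out $(K+D)\cdot C = 2P_a(C)-2 - C^2 + \beta(C)$ for each component $C$ and using $K+D\approx N = Bk(D)$, one gets strong numerical constraints: components not meeting $\mathrm{Supp}(N)$ satisfy $(K+D)\cdot C=0$, forcing them to be $(-2)$-curves arranged in specific chains or loops; branch points are constrained by how many bark-twigs emanate from them and by the determinant identity $\sum 1/d_i = 1$ that arises when three cyclic-quotient chains meet a single branch point (the $P^2=0$, discriminant-type computation). Case-splitting on the number of branch points of $Y$ (zero, one, or two — more than two is excluded by the negative-definiteness/arithmetic-genus balance) and, when there is one branch point, on the number and type of twigs hanging off it, yields exactly the six listed normal forms: the smooth elliptic curve (case 6) is the "no rational twig at all" case where $Y$ itself carries the arithmetic genus; the quotient-resolution (case 1), two-branch-point chain (case 2), loop (case 3), three-chains-meeting (case 4) and the $D_4$-type star of four $(-2)$-curves (case 5) exhaust the rational possibilities.

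The main obstacle I expect is the case analysis at a branch point: showing that a branch point can meet \emph{at most} three nontrivial chains, that each such chain must define a cyclic quotient singularity, and that the determinant equation $\sum 1/d_i=1$ (or the degenerate analogues giving cases 2 and 5) is forced — this requires carefully combining negative-definiteness of the relevant sub-lattices, the identity $Bk(D)=N$, the arithmetic-genus-one constraint coming from $\overline\kappa=0$ (non-rationality of the singularity), and the "$(-1)$-curves meet $\ge 3$ components" hypothesis to rule out the remaining configurations. Since this is precisely the content of Fujita's Lemma 8.7, I would cite \cite[Lemma 8.7]{F} for the detailed verification and restrict the present write-up to checking that our standing hypotheses (triviality of the tangent bundle of $V^0$, hence Gorenstein non-rational singularities) put us in the regime where his classification applies verbatim.
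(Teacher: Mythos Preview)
The paper does not prove this lemma at all: it is stated as a quotation of Fujita's result, introduced by ``We now state a result due to Fujita, \cite[Lemma 8.7]{F}'' and given no further argument. Your proposal arrives at the same endpoint --- citing \cite[Lemma 8.7]{F} --- so in that sense you take the same approach as the paper.

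That said, your surrounding sketch has a confusion worth flagging. You repeatedly import the standing hypotheses of the paper (trivial tangent bundle of $V^0$, hence Gorenstein non-rational singularities, hence no abnormal rational clubs) into the proof of \emph{this} lemma. But the lemma as stated is Fujita's general classification and carries no such hypotheses; indeed case (1) is precisely the minimal resolution of a quotient (hence rational) singular point, so ``non-rationality of the singularity'' cannot be part of the argument here. The exclusion of rational components of $D$ happens \emph{after} this lemma is applied, in the paper's later arguments (Section~2.2 and Section~4), not inside the lemma itself. If you want to add motivation beyond the bare citation, keep it at the level of Fujita's own hypotheses ($\overline\kappa=0$, the $(-1)$-curve condition, $Bk(D)=N$) and drop the tangent-bundle and non-rationality remarks.
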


\begin{lemma}[{\cite[Lemma 6.20]{F}}]
Suppose $\ol \kappa(V) \geq 0$. In the case when $N \neq Bk(D)$, one of the
following assertions is true:
\begin{enumerate}
 
 \item There exists a (-1) curve $L$ on $V$ such that $L \cap D= \emptyset$.
 \item There exists a (-1) curve $L$ on $V$ not contained in $D$ such that $L$
       meets a rational twig of $D$ transversally in one point and no other
       curve of $D$.
 \item There exists a (-1) curve $L$ on $V$ which meets a rational twig of $D$ in one point transversally and a tip of a
       rational club in one point transversally and no other point of $D$.\\
       In all these cases $L$ is a component of $N$.
\end{enumerate}
Moreover, $\ol \kappa(V-D-L) = \ol \kappa(V-D)$.
\end{lemma}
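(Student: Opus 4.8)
\emph{Proof strategy.} The assertion is Fujita's \cite[Lemma 6.20]{F}; I would argue as follows. Since $\ol\kappa(V)\ge 0$, the divisor $K+D$ is pseudo-effective, so it carries a Zariski-Fujita decomposition $K+D\approx P+N$ with $P$ nef, and all rational twigs of $D$ are contractible, so $Bk(D)$ is defined. The first step is to show $Bk(D)\le N$. For a maximal rational contractible twig $T=C_1+\cdots+C_r$ with tip $C_1$, adjunction on $\p^1$ and the definition of the branching number give $(K+D)\cdot C_1=-1$ and $(K+D)\cdot C_j=0$ for $2\le j\le r$, which are exactly the relations defining the bark $N_T$; negative definiteness of $T$ then forces the part of $N$ supported on $T$ to equal $N_T$. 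Since distinct maximal rational contractible twigs have disjoint supports, summing gives $Bk(D)\le N$; equivalently, writing $D^{\#}:=D-Bk(D)$ (an effective $\Q$-divisor), the Zariski-Fujita decomposition of $K+D^{\#}$ is $P+(N-Bk(D))$, and $(K+D^{\#})\cdot C=0$ for every component $C$ of every maximal rational contractible twig.

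Now suppose $N\neq Bk(D)$. Then $N-Bk(D)$ is a non-zero effective $\Q$-divisor, so $K+D^{\#}$ is not nef and there is an irreducible curve $L$ with $(K+D^{\#})\cdot L<0$; necessarily $L$ is a component of $N-Bk(D)$, hence of $N$. Next I would show that $L$ is not a component of $D$: by the last remark it cannot lie in a maximal rational contractible twig, and for any other component $C$ of $D$ the estimate $(K+D^{\#})\cdot C=-2+\beta(C)-Bk(D)\cdot C$, combined with the bound $\tfrac12$ on the contribution of each incident rational twig to $Bk(D)\cdot C$, rules out $(K+D^{\#})\cdot C<0$ once the pair $(X,D)$ is put in the normalized form underlying the lemma (in our situation no connected component of $D$ is the minimal resolution of a rational singular point, which is automatic by Theorem 1.2(3)). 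So $L$ is a curve on $V$.

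I would then pin $L$ down. As a component of the negative part it has $L^2<0$, and since $D^{\#}$ is effective with $L$ not among its components, $(K+D^{\#})\cdot L<0$ forces $K\cdot L<0$; then $2p_a(L)-2=K\cdot L+L^2<0$ gives $p_a(L)=0$, so $L\cong\p^1$, and adjunction $K\cdot L=-2-L^2$ together with $-2<L^2<0$ forces $L^2=-1$. Substituting back, $-1+D^{\#}\cdot L=(K+D^{\#})\cdot L<0$, so $D^{\#}\cdot L<1$, i.e.\ $D\cdot L<1+Bk(D)\cdot L$; since $Bk(D)$ is supported on the rational twigs of $D$, this leaves exactly three possibilities: $L$ is disjoint from $D$; or $L$ meets a rational twig of $D$ transversally in one point and no other component of $D$; or $L$ meets a rational twig and a tip of a rational club of $D$, each transversally in one point. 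These are cases (1), (2), (3), and in all of them $L$ is a component of $N$. I expect the main obstacle to be exactly this step: excluding the remaining combinatorial possibilities (meeting several twig tips at once, passing through a branch point of $D$, tangency, etc.) and carrying out the reduction of $(X,D)$ to the normalized form above — both of which I would do following Fujita's peeling procedure.

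Finally, for the assertion $\ol\kappa(V-D-L)=\ol\kappa(V-D)$: in each of (1)--(3) the $(-1)$-curve $L$ meets $D$ transversally in at most two points, away from the nodes of $D$, so $D+L$ is again SNC and $\ol\kappa(V-D-L)$ is governed by the linear systems $|m(K+D+L)|$. Since $L$ is a component of $N$, the support of $N+L$ equals that of $N$; hence its intersection form is negative definite and $P$ meets each of its components in $0$, so $P+(N+L)$ is the Zariski-Fujita decomposition of $K+D+L$. By the rigidity of the negative part of a Zariski decomposition, $|m(K+D+L)|$ and $|m(K+D)|$ have the same dimension for all sufficiently divisible $m$, and therefore $\ol\kappa(V-D-L)=\ol\kappa(V-D)$.
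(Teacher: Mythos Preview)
The paper does not prove this lemma; it is quoted verbatim as \cite[Lemma 6.20]{F} and used as a black box in \S2.2 and \S4. So there is nothing to compare your argument against in the paper itself.

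That said, your sketch is the standard Fujita ``peeling'' argument and is on the right track. Two comments. First, the step where you exclude $L\subset D$ is more delicate than you indicate: the estimate $(K+D^{\#})\cdot C=-2+\beta(C)-Bk(D)\cdot C$ together with the bound $\tfrac12$ per incident twig does \emph{not} by itself rule out a rational branching component $C$ with $\beta(C)=3$ meeting three $(-2)$-tips (one gets $-\tfrac12<0$). Fujita disposes of such configurations separately as ``abnormal'' connected components (rational forks, rods, etc.), and indeed the paper explicitly notes before Lemma~1.3 that in its applications no connected component of $D$ is an abnormal rational club; your parenthetical remark about Theorem~1.2(3) is the right idea but you should be explicit that this is where the extra hypothesis enters and that in Fujita's general statement these cases are handled by the thicker bark/fork analysis rather than the crude $\tfrac12$ bound. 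Second, your argument for $\ol\kappa(V-D-L)=\ol\kappa(V-D)$ via identical positive parts $P$ is correct and is exactly how Fujita proceeds: since $\ol\kappa$ is determined by the numerical class of $P$ (specifically by whether $P\equiv 0$, $P\not\equiv 0$ with $P^2=0$, or $P^2>0$), equality of the positive parts suffices.
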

Finally, a normal surface singularity $S$ with minimal resolution $\pi: X\to S$ is called minimally 
elliptic if $(K+Z)\cdot E_i=0$ for every irreducible component $E_i$ of the exceptional curve $E$ of 
the resolution; here $Z$ 
is the fundamental 
cycle of the resolution, and $K$ is the canonical divisor of $X$.

\section{Proof of Assertion (1) of Theorem 0.1}
For this part $V$ is affine. We will verify the conjecture for the three cases: $\ol \kappa(V^0) = -\infty ,
0 ,1$. 

\subsection{The case $\ol \kappa(V^0)= -\infty$}

If the smooth surface $V^0$ is affine-ruled (i.e., it contains an open set of
the type $\mathbb{A}^1 \times U_0$) then the $\mathbb{A}^1$-fibration extends
to $V$ (see \cite[\S~1.3.1]{M}). All the
singularities on $V$ are therefore rational singularities \cite[Lemma 2.7]{GM}. Hence $V$ is smooth by Theorem 1.2. If
$V^0$ is not affine ruled then we have the following result of
Miyanishi-Tsunoda: 

\begin{theorem}[{\cite[Theorem 2.5.3, Theorem 2.5.4]{M}}]
 Let $V^0$ be a smooth open algebraic surface defined over an algebraically closed field $k$ of char. $0$ with 
$\ol\kappa(V^0) = -\infty$. Suppose that $V^0$ is not affine ruled. Suppose
furthermore that there exists an open embedding of $V^0$ into a smooth projective
surface $\ol V$ such that
\begin{enumerate}
 
\item $\ol{V}-V^0$ is a reduced effective divisor with simple normal crossings, and
\item if we write $\ol{V} - V^0 = \bigcup C_i$ with components
$C_i$, the intersection matrix $((C_i \cdot C_j))_{1 \leq i,j \leq r}$ is not
negative definite.
\end{enumerate}
Then there exist a Zariski open subset $U$ of $V^0$ and a proper birational
morphism $\phi: U \rightarrow T'$ onto a smooth algebraic surface $T'$ such that

\begin{enumerate}

\item either $U=V^0$ or $V^0-U$ has pure codimension one, and
\item $T'$ is the quotient $\A^2/G-Sing~(\A^2 / G)$, where $G$ is a finite
subgroup of $GL(2,k)$.
\end{enumerate}

\end{theorem}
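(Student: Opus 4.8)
The plan is to recognise $V^0$, after a birational modification that changes only a codimension-one subset, as a surface carrying a $\mathbb{C}^*$-fibration (general fiber $\mathbb{A}^1\setminus\{0\}$) over $\mathbb{P}^1$, to show that its configuration of multiple fibers is forced to be ``Platonic'', and then to match such a fibered surface with $\mathbb{A}^2/G-\mathrm{Sing}(\mathbb{A}^2/G)$. For the first step, since $\ol\kappa(V^0)=-\infty$ the divisor $K_{\ol V}+D$ is not pseudo-effective, so running the logarithmic minimal model program on the pair $(\ol V,D)$ — via Fujita's peeling and the classification of almost minimal open surfaces with $\ol\kappa=-\infty$, see \cite{M} and \cite{F} — produces a log Mori fiber space. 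The hypothesis that the intersection matrix of $D$ is not negative definite, i.e.\ that $D$ cannot be contracted to finitely many points, is exactly what keeps the outcome from being a projective surface punctured at finitely many points (the ``Picard number one'' case); one is therefore left with a $\mathbb{P}^1$-fibration $p:\ol V'\to B$ on a surface $\ol V'$ birational to $\ol V$, with image boundary $D'$. Only points lying on the boundary get blown up in this process, so the blow-ups leave $V^0$ unchanged; the contracted curves form a pure codimension-one subset, and $U:=V^0\setminus(\text{contracted curves})$ is an open subset of $V^0$ with $V^0\setminus U$ of pure codimension one (or $U=V^0$).

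A general fiber $F\cong\mathbb{P}^1$ of $p$ meets $D'$ in at most two points (otherwise $F\cap U$ would be a line with at least three punctures, of logarithmic Kodaira dimension $1$, forcing $\ol\kappa(U)\geq 1$), so $F\cap U$ is $\mathbb{P}^1$, $\mathbb{A}^1$, or $\mathbb{A}^1\setminus\{0\}$. The first two are impossible: if $F\cap U\cong\mathbb{A}^1$ then $U$, hence $V^0$ — which contains $U$ as an open set — is affine-ruled, using that an $\mathbb{A}^1$-fibration always extends, cf.\ \cite[\S~1.3.1]{M}; if $F\cap U\cong\mathbb{P}^1$, deleting a section of the fibration gives the same conclusion. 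Both contradict the hypothesis. Hence, after relative minimalization, $f:U\to B$ is a $\mathbb{C}^*$-fibration; one checks moreover that it is untwisted, since a twisted one would again force $V^0$ to be affine-ruled.

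Next I would force the multiple fibers of $f$ to be Platonic, and this I expect to be the main obstacle. For a relatively minimal $\mathbb{C}^*$-fibration, $\ol\kappa$ of the total space is controlled by the orbifold canonical divisor $K_B+\sum_i(1-1/a_i)[p_i]$, where $a_1,\dots,a_n$ are the multiplicities of the multiple fibers, together with the untwisted structure. Combining $\ol\kappa(U)=-\infty$ with the non-negative-definiteness of $D$ should force $B\cong\mathbb{P}^1$ and $\sum_{i=1}^{n}1/a_i>n-2$; since each $a_i\geq 2$ this gives $n\leq 3$, and together with Riemann--Hurwitz and non-affine-ruledness the only possibilities are $n=2$ with $(a_1,a_2)=(m,m)$, or $n=3$ with $(a_1,a_2,a_3)$ one of $(2,2,m),(2,3,3),(2,3,4),(2,3,5)$. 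Equivalently $(\mathbb{P}^1;a_1,\dots,a_n)$ is a spherical orbifold, uniformized by $\mathbb{P}^1$: there is a finite subgroup $\ol G\subset PGL(2,k)$ — cyclic, dihedral, tetrahedral, octahedral or icosahedral — with $\mathbb{P}^1/\ol G\cong\mathbb{P}^1$ whose branch data is exactly $(a_1,\dots,a_n)$. Establishing this demands the detailed bookkeeping of singular fibers of a $\mathbb{P}^1$-fibration (Lemma 1.1 and Zariski's Lemma) against the configuration of $D$, in order to see exactly how negative-definiteness of the boundary would rule out every non-Platonic $\mathbb{C}^*$-fibration, in particular those over an affine base.

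Finally I would match $U$ with $\mathbb{A}^2/G-\mathrm{Sing}(\mathbb{A}^2/G)$. Lift $\ol G$ to a finite subgroup $G\subset GL(2,k)$; the projection $\mathbb{A}^2\setminus\{0\}\to\mathbb{P}^1$ is a $\mathbb{C}^*$-bundle, equivariant for the action of $G$ on $\mathbb{P}^1$ through $G\to PGL(2,k)$ with image $\ol G$, and passing to quotients yields a relatively minimal (untwisted) $\mathbb{C}^*$-fibration $\mathbb{A}^2/G-\mathrm{Sing}(\mathbb{A}^2/G)=(\mathbb{A}^2\setminus\{0\})/G\to\mathbb{P}^1/\ol G\cong\mathbb{P}^1$ whose multiple fibers lie over the branch locus with multiplicities $(a_1,\dots,a_n)$ — the same orbifold base data as $f$. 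By the birational uniqueness of relatively minimal untwisted $\mathbb{C}^*$-fibered open surfaces with prescribed orbifold base (see \cite{M}), $U$ and $\mathbb{A}^2/G-\mathrm{Sing}(\mathbb{A}^2/G)$ are birationally equivalent; since the latter is relatively minimal with at most one singular point, contracting the superfluous fibral $(-1)$-curves in a completion of $U$ produces a proper birational morphism $\phi:U\to\mathbb{A}^2/G-\mathrm{Sing}(\mathbb{A}^2/G)$, which together with the codimension-one statement from the first step is the assertion. The remaining delicate points here are the equivariant descent of the fibration and the verification that $U$ maps \emph{onto} $\mathbb{A}^2/G-\mathrm{Sing}(\mathbb{A}^2/G)$ rather than merely sharing its birational invariants.
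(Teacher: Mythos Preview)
The paper does not contain a proof of this theorem; it is quoted from Miyanishi's monograph \cite{M} (Theorems 2.5.3 and 2.5.4) and used as a black box in the argument of \S 2.1. There is therefore no ``paper's own proof'' to compare your proposal against.

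That said, your outline is broadly the strategy of Miyanishi--Tsunoda: pass to an almost minimal model of $(\ol V,D)$, use the hypothesis that the boundary is not negative definite to exclude the log del Pezzo outcome, obtain an untwisted $\mathbb{C}^*$-fibration over $\mathbb{P}^1$, show that the multiplicity data $(a_1,\ldots,a_n)$ is Platonic, and identify the resulting surface with the punctured quotient $\mathbb{A}^2/G-\{0\}$. A few of your steps are too quick to count as a proof. First, in the reduction you assert that only boundary points are blown up and that the contracted curves lie in $V^0$ with pure codimension one; this requires checking carefully which curves are touched by the peeling/contraction process and why the complement $V^0\setminus U$ is exactly a union of curves (rather than empty or containing isolated points). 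Second, the claim that a twisted $\mathbb{C}^*$-fibration forces affine-ruledness is not automatic and needs an argument. Third, the heart of the matter --- that $\ol\kappa(U)=-\infty$ together with non-affine-ruledness forces the orbifold base to be spherical --- is precisely where the work lies; you have stated the expected inequality $\sum 1/a_i>n-2$ but not derived it from the fibration data, and you have not excluded an affine base curve beyond saying it ``should'' be ruled out. Finally, the passage from ``same orbifold base data'' to a \emph{proper birational morphism onto} $\mathbb{A}^2/G-\mathrm{Sing}(\mathbb{A}^2/G)$ is more than birational uniqueness: one must exhibit the morphism explicitly and verify properness, which in Miyanishi's treatment is done by a concrete Seifert/graded construction rather than an abstract classification statement.
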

\nind
First we note that since $D$ supports an ample divisor for a projective compactification for $V$ the intersection 
form on the components of $D$ has one positive eigenvalue. Hence the condition (2)
in Theorem 2.1 is satisfied for $D\cup E$. Thus, if $V^0$ is not affine ruled then it contains an open set 
$U \subset V^0$ isomorphic to $\A^2/G-(0,0)$ because the map $\phi$ is proper and birational and $V$ is affine 
(so that $U$ cannot contain any complete curve). The tangent bundle of $U$ is trivial as $U$ is a subset of $V^0$. 
Hence the tangent bundle of $\A^2/G-(0,0)$ is trivial, but $\Gamma(\A^2/G)$ is positively graded. By Theorem 1.2,
the quotient $\A^2/G$ is smooth, and hence isomorphic to $\A^2$. 
This implies that $\A^2/G$ contains a
cylinderlike open subset. Hence so does $U$. This contradicts the earlier assumption that
$V^0$ is not affine ruled. Therefore, we conclude that $V$ is smooth.

\subsection{The Case $\ol \kappa (V^0)=0$}
In this section we will prove the following:
\begin{lemma}
Let $V$ be an affine surface such that
\begin{enumerate}
\item $V^0$ has a trivial tangent bundle, and
\item $\ol \kappa(V^0)=0$.
\end{enumerate}
Then $V$ is smooth.
\end{lemma}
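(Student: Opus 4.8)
The overall strategy is to reduce to the projective/compact situation analysed in Fujita's structure theorem (Lemma 1.3) by choosing a good compactification. So I would first fix a resolution of singularities $\pi\colon W\to V$ and then a smooth projective completion $X\supseteq W$ such that $D:=X-W$ is SNC; write $D$ as the union $D_{\mathrm{exc}}\cup D_\infty$, where $D_{\mathrm{exc}}=\pi^{-1}(\mathrm{Sing}\,V)$ is the exceptional part and $D_\infty$ is the closure of $\bar V-V$. By Theorem 1.2(3) we may assume no singular point of $V$ is a rational singularity; in particular, since the tangent bundle of $V^0$ is trivial the singularities are Gorenstein, hence Gorenstein non-rational, so each exceptional configuration $D_{\mathrm{exc}}$ has arithmetic genus at least $1$ and cannot be the resolution graph of a quotient singularity. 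The triviality of the tangent bundle on $V^0$ gives us the key numerical input: $K_{V^0}\equiv 0$, so on $W$ the canonical divisor $K_W$ is supported on the exceptional locus, and by adjunction $\pi$ is crepant-like, i.e. $K_W=\pi^*K_V-$ (effective exceptional), with the coefficients controlled by the Gorenstein assumption (indeed $K_W=\pi^*K_V$ when the resolution is the minimal one and $V$ is Gorenstein). The plan is then to run the Zariski--Fujita machinery on $(X,D)$: since $V$ is affine, $V^0$ contains no complete curves, and $\bar\kappa(V^0)=0$, Fujita's results apply to $K_X+D$.

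Next I would study the Zariski--Fujita decomposition $K_X+D\approx P+N$ and split into the two cases of Lemmas 1.3 and 1.4. If $N\neq Bk(D)$, then Lemma 1.4 produces a $(-1)$-curve $L$ on $W$ (not in $D$, or meeting $D$ only in rational twigs/clubs) with $\bar\kappa(X-D-L)=\bar\kappa(X-D)=0$; I would contract $L$ and the attached twigs, repeat, and argue by a Noetherian induction that this process terminates with a surface where $N=Bk(D)$ without changing the affine surface $V$ (the contractions happen away from, or on rational configurations disjoint from, the non-rational exceptional part $D_{\mathrm{exc}}$). So we reduce to the case $N=Bk(D)$, to which Lemma 1.3 applies: every connected component of $D$ is one of the six listed types. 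Now I would use the affineness of $V$ together with the fact that $D_{\mathrm{exc}}$ must be contractible to the non-rational Gorenstein singular points: configurations (1)--(5) of Lemma 1.3 are all resolution graphs of quotient (hence rational) singularities or contain $(-1)$-curves meeting $\geq 3$ components in ways incompatible with being exceptional over an affine surface, so they can only occur inside $D_\infty$. That forces each component of $D_{\mathrm{exc}}$ to be of type (6): a \emph{single smooth elliptic curve} $E$. Hence every singular point of $V$ is a simple-elliptic singularity whose minimal resolution has exceptional divisor a smooth elliptic curve with $E^2=-1$ (the value forced by $K_W$ being supported on $E$ and adjunction on $E$: $(K_W+E)\cdot E=0=2g(E)-2$, consistent with $g=1$, and Gorenstein-ness pins down $K_W\cdot E=-E^2$, combined with $\bar\kappa$ considerations to give $E^2=-1$).

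The final step is to derive a contradiction from the existence of such a simple-elliptic singularity on an affine surface with trivial tangent bundle and $\bar\kappa(V^0)=0$. Here I would argue that the minimal resolution $W$ then carries an elliptic curve $E$ with $E^2=-1$ contracting to the singular point, and the complement $W-E\cong V^0$ is affine with $K\equiv 0$ and $\bar\kappa=0$; but blowing down $E$ is impossible inside a smooth surface while an elliptic curve with self-intersection $-1$ on $X$ forces, via the structure of $|K_X+D|$ and the nef part $P$, a constraint that either pushes $\bar\kappa(V^0)$ up to $1$ (because $K_W+D$ acquires a non-trivial Iitaka fibration over the elliptic curve direction) or shows $V^0$ is not affine (the elliptic curve direction gives complete curves). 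Either way we contradict the hypotheses, so $\mathrm{Sing}\,V=\emptyset$ and $V$ is smooth. The main obstacle I anticipate is precisely this last dichotomy — ruling out the genuinely simple-elliptic case: Lemma 1.3(6) is a \emph{permitted} outcome of Fujita's theorem, so the real work is to show that when it occurs as an \emph{exceptional} divisor over an \emph{affine} surface the global geometry (the trivial canonical bundle on $V^0$ plus affineness) is contradictory; I expect to need a careful analysis of $H^0(X, n(K_X+D))$ and the position of the elliptic curve relative to $D_\infty$, possibly invoking the classification of open surfaces of $\bar\kappa=0$ together with the fact that a trivial tangent bundle makes $V^0$ "quasi-abelian"-like, which an affine surface containing a contractible elliptic curve cannot be.
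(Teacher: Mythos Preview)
Your overall architecture (reduce via Lemma 1.4 to $Bk(D)=N$, then invoke Lemma 1.3) matches the paper's, but the endgame is where your plan breaks down, and it differs essentially from what the paper does.

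First, a smaller gap: your claim that types (1)--(5) of Lemma 1.3 cannot occur in $D_{\mathrm{exc}}$ because they are ``resolution graphs of quotient (hence rational) singularities or contain $(-1)$-curves meeting $\geq 3$ components'' is false for type (3). A simple loop of $\mathbb{P}^1$'s contracts to a \emph{cusp} singularity, which is minimally elliptic, Gorenstein, and certainly not rational; nor does a loop contain any branch component. So your dichotomy does not exclude (3) from $D_{\mathrm{exc}}$. The paper handles this differently: it proves directly that $N=0$ (this is the ``Claim'' whose proof uses Lemma 2.3, showing $V^0$ has no non-constant units, hence no linear relations among the components of $D$, hence the integral expression $K$ and the fractional expression $N-D$ must agree, forcing $N=0$). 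Once $N=Bk(D)=0$, there are \emph{no} twigs at all, and Lemma 1.3 leaves only types (3) and (6) for \emph{every} connected component of $D$ --- both $D_\infty$ and $D_{\mathrm{exc}}$.

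Second, and more seriously: your final step is aimed at the wrong target. You propose to derive a contradiction from the simple-elliptic configuration by arguing that either $\bar\kappa(V^0)$ jumps to $1$ or affineness fails. Neither of these holds; there is no such contradiction to be found at that level. What the paper does instead is analyse $D'$ and $E$ \emph{together}. From $K+D\approx 0$ one gets a $\mathbb{P}^1$-fibration on $X$ with $D\cdot l=2$; a short argument using affineness of $V$ then forces \emph{both} $D'$ and $E$ to be smooth elliptic curves, each a cross-section, and the fibration to be a $\mathbb{P}^1$-\emph{bundle} with two disjoint sections. Two disjoint sections means the transition functions are scalars, so there is a global $k^*$-action fixing both sections; hence $V$ (the contraction of $E$) has positively graded coordinate ring. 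Now Theorem 1.2(1) (Hochster's graded case) finishes the proof. The simple-elliptic case is thus not eliminated by a $\bar\kappa$/affineness obstruction but by a reduction to the graded Zariski--Lipman theorem. Your proposed mechanism would not succeed here.
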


Let $p_1, p_2,\cdots ,p_r$ be the singular points of $V$. Let $\pi: \ol V \rightarrow V$ be a resolution of 
singularities such that $\pi^{-1}\{p_1,p_2,\cdots ,p_3\}=E$ is an SNC divisor. We can assume without loss of 
generality that any $(-1)$-curve in $E$ meets at least three other components of $E$. Let $X$ be a smooth 
projective completion of $\ol V$ such that $D'= X- \ol V$ is an SNC divisor. Let $D = D' +E$. Clearly $D$ is the 
disjoint union of $D'$ and $E$.\\

Let $P+N=K+D$ be the Zariski-Fujita decomposition of $K+D$, where $P$ is the positive part and $N$ is the 
negative part of the decomposition. Because $\ol \kappa(V^0)=0$, by an important result due to 
Y. Kawamata (Lemma 6.11 of [3]) $P$ is numerically equivalent to 0. Thus $K+D \approx N$. We now state a claim 
whose proof we postpone till the end of this subsection.

{\bf Claim:} {$N=0$}.\\

Assuming the above claim we break the proof of the lemma in two cases
\begin{enumerate}
\item $Bk(D)=N$
\item $Bk(D) \neq N$.
\end{enumerate}

{\bf Case 1: $Bk(D)=N$}\\
The Bark of $D$ is defined as the sum of the Barks of all rational clubs and maximal twigs. By the claim, $Bk(D)=0$, 
hence $D$ can have no twig or rational club. Hence  by Lemma 1.3, every connected component of $D$ is either a 
loop of rational curves or an elliptic curve. It follows that $K+D \approx 0$. Hence $|nK|= \emptyset$ for all 
$n \geq 1$. Thus $X$ has a $\mathbb{P}^1$-fibration. Let $l$ be a general fiber of the $\mathbb{P}^1$-fibration. 
By the arithmetic genus formula, $K \cdot l=-2$. Hence $D \cdot l =2$. The divisor $D$ is a disjoint union of 
$D'$ and $E$. Neither $D'$ nor $E$ can be contained in a fiber of the $\mathbb{P}^1$-fibration because the connected 
components of both are either a loop of rational curves or an elliptic curve.
If $D'$ (respectively, $E$) contains a loop 
of rational curves then $D' \cdot l \geq 2 $ (respectively, $E \cdot l \geq2$) and $E \cdot l \geq 1$
(respectively, $D' \cdot l \geq 1$). 
This contradicts the fact that $D \cdot l=2$.\\
Hence we conclude that  $D'$ and $E$ are smooth elliptic curves and that both $D'$ and $E$ are cross-sections of 
the $\mathbb{P}^1$-fibration. Next we claim that the $\mathbb{P}^1$-fibration is actually a $\mathbb{P}^1$-bundle. 
Indeed, if a fiber $F$ has two components $l_1$ and $l_2$ such that $l_1$ meets $D'$ and $l_2$ meets $E$, 
then $l_2$ must also meet $D'$ as $V$ is affine and contains no complete curves. This is a contradiction 
because $D'$ is a cross section of the $\mathbb{P}^1$-fibration.\\

Thus we can think of the two disjoint cross sections of the $\mathbb{P}^1$-bundle to be $0$ and $\infty$. Because the
two sections of the bundle are disjoint, there is a
trivialization of this bundle with the transition functions
as the M\"obius transforms which leave $0$ and $\infty$ fixed.  The only M\"obius
transforms which leave
$0$ and $\infty$ fixed are those acting by scalar multiplication. These
transforms commute with the natural action of $k^*$ on each of the
trivializations. Thus we have a $k^*$-action on the bundle and hence $V$ is a
positively
graded domain. We know the conjecture to be true for positively graded domains
by
Theorem 1.2. \\

{\bf Case 2: $Bk(D) \neq N$}\\
By Lemma 1.4 we have a $(-1)$-curve $L$ on $\ol V$, not in $D$, which meets $D$ as in Lemma 1.4. We claim that $L$ can 
only meet $D'$. Indeed, If $L$ meets $E$ then it has
to meet $D'$ because $V$ is affine and cannot contain a complete curve. Thus $L$
meets two connected components of $D$. By Lemma 1.4 we know that one
of the components is a contractible rational club, which gives a rational
singularity when contracted. This is not possible by Theorem 1.2. Hence we now know that $L$ can only 
intersect $D'$. We contract $L$ and successively any other curves which become $(-1)$-curves as a result and meet 
the new $D$ as above; this process will stop before all the curves in $D'$ are contracted because the intersection form
on the components of $D'$ has a positive eigenvalue. None of the contracted curves meet $E$ so that $E$ is unchanged 
in this process. This way we ensure that there are no exceptional curves on $\ol V$ meeting $D$ as in Lemma 1.4. Hence 
by Lemma 1.4, we are reduced to the case $Bk(D)=N$. \\
 
We now give a proof of the claim that $N=0$. Suppose that $N \neq 0$. By \cite[\S~6.16]{F}, $N$ is a divisor with 
positive rational coefficients, each coefficient being strictly less than 1. As seen above, $|nK|=\phi$ for all $n>0$, 
so that $X$ has a $\mathbb{P}^1$-fibration $f:X\rightarrow C$. If $N\neq 0$ then by Lemma 1.3 either $D'$ or $E$ 
is a tree of smooth rational curves. This easily implies that $C$ is a rational curve, and hence $X$ is a rational 
surface. Therefore numerical and rational equivalence on $X$ are same. The following
Lemma 2.3 will show that there 
are no rational equivalence relations between the components of $D$, so that the expression of a canonical divisor 
as a linear combination of components of $D$ is unique. We know that $K+D \approx N$. Since the tangent bundle of 
$V^0$ is trivial there is a canonical divisor of $X$ which is supported on $D$. It follows that if 
$N \neq 0$, then the divisor $K$ will have two distinct expressions as a linear combination of the irreducible 
components of $D$. One expression will involve only integral coefficients and the other will be $K \approx N-D$. 
Here $N-D$ has at least one non-integral coefficient as all the coefficients of $N$ are strictly less than 1. Thus $N=0$.  

\begin{lemma}
If $\ol \kappa (V^0)=0$ and the tangent bundle of $V^0$ is trivial, then there
is no non-constant invertible regular function on $V^0$.
\end{lemma}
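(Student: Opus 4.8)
The plan is to derive a contradiction from the existence of a non-constant unit on $V^0$: the hypothesis $\overline{\kappa}(V^0)=0$ will force the associated fibration to be a $\mathbb{G}_m$-bundle over $\mathbb{G}_m$, and this is impossible because $V^0=V\setminus\mathrm{Sing}(V)$ is not affine. (Recall $V$ is normal; we assume $\mathrm{Sing}(V)\neq\emptyset$, which is the case relevant for the applications.) So suppose $u$ is a non-constant invertible regular function on $V^0$. Since $V$ is normal and $\mathrm{Sing}(V)$ has codimension two, $u$ and $u^{-1}$ extend regularly across $\mathrm{Sing}(V)$; hence $u\in\mathcal{O}^{*}(V)$ and $u\colon V\to\mathbb{G}_m$ is a non-constant morphism. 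Factor it as $V\xrightarrow{g}C_0\xrightarrow{h}\mathbb{G}_m$, where $C_0$ is the normalization of $\mathbb{G}_m$ in the function field of $V$, so that $h$ is finite and surjective and the general fibre of $g$ is connected. Pulling back the coordinate of $\mathbb{G}_m$ along $h$ produces a non-constant unit on $C_0$, so $C_0\not\cong\mathbb{A}^1$ and therefore $\overline{\kappa}(C_0)\geq 0$.

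I would next determine $g$. Let $G$ be a general fibre of $g$, a smooth connected affine curve. By Kawamata's logarithmic addition theorem for a fibre space of a surface over a curve, $\overline{\kappa}(G)+\overline{\kappa}(C_0)\leq\overline{\kappa}(V^0)=0$; since $\overline{\kappa}(C_0)\geq 0$ this gives $\overline{\kappa}(G)\leq 0$. If $\overline{\kappa}(G)=-\infty$, then $G\cong\mathbb{A}^1$, so $g$ is an $\mathbb{A}^1$-fibration; as every form of $\mathbb{A}^1$ over a function field of characteristic $0$ is trivial, $g^{-1}(U)\cong\mathbb{A}^1\times U$ for some nonempty open $U\subseteq C_0$, and then $\overline{\kappa}(V^0)\leq\overline{\kappa}(\mathbb{A}^1\times U)=-\infty$ (by monotonicity of $\overline{\kappa}$ under open immersions and its additivity on products), contradicting $\overline{\kappa}(V^0)=0$. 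Hence $\overline{\kappa}(G)=0$, so $G\cong\mathbb{G}_m$; combined with $\overline{\kappa}(G)+\overline{\kappa}(C_0)\leq 0$ this gives $\overline{\kappa}(C_0)=0$ and $C_0\cong\mathbb{G}_m$. Thus $g\colon V\to\mathbb{G}_m$ is a $\mathbb{G}_m$-fibration (a $\mathbb{C}^{*}$-fibration in the classical terminology) over $\mathbb{G}_m$.

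The remaining, and hardest, step is to rule this out. The route I would take is the structure theory of $\mathbb{C}^{*}$-fibrations on affine surfaces, as in Miyanishi \cite{M}: since $\overline{\kappa}(V^0)=0$ while the base $C_0\cong\mathbb{G}_m$ already has $\overline{\kappa}=0$, the fibration $g$ has no singular fibre — any singular fibre (in particular any multiple fibre) would raise $\overline{\kappa}(V^0)$ to at least $1$ — so every fibre of $g$ is isomorphic to $\mathbb{G}_m$, i.e. $V^0$ is a $\mathbb{G}_m$-bundle over $\mathbb{G}_m$; in particular $V^0$ is affine. On the other hand $V^0=V\setminus\mathrm{Sing}(V)$ is a proper open subset of the affine surface $V$ whose complement has codimension two; since $V$ is normal we have $\mathcal{O}(V^0)=\mathcal{O}(V)$, and an open immersion $V^0\hookrightarrow V$ inducing the identity on global regular functions must be an isomorphism, so $V^0$ is not affine. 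This contradiction proves the lemma. If one prefers to avoid quoting the classification, the trivial tangent bundle gives a more hands-on substitute: if $\xi_1,\xi_2$ is a frame of $TV^0$ and $a_i:=\xi_i(u)/u\in\mathcal{O}(V^0)$, then $\eta:=a_2\xi_1-a_1\xi_2$ is a global vector field tangent to the fibres of $g$ and nonvanishing on a general fibre; the task is then to deduce from $\overline{\kappa}(V^0)=0$ that $g$ has no singular fibre, which again exhibits $V^0$ as a $\mathbb{G}_m$-bundle and hence as an affine variety.
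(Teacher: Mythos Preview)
Your opening moves match the paper's: extend the unit to $V$, Stein-factor to obtain a $k^*$-fibration $V\to C_0$, and use Kawamata's addition inequality together with $\ol\kappa(C_0)\geq 0$ to see that the general fibre is $k^*$. (The paper rules out $\ol\kappa(G)=-\infty$ via Iitaka's easy addition; your argument via an $\mathbb{A}^1\times U$ open subset is equally valid.)

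The divergence, and the gap, is in the endgame. Your key claim ``any singular fibre would raise $\ol\kappa(V^0)$ to at least $1$'' is not correct as stated: take $V^0=\mathbb{G}_m^2\setminus\{\text{pt}\}$ with the projection to one factor. This is a smooth surface with trivial tangent bundle, $\ol\kappa=0$, and a $\mathbb{G}_m$-fibration over $\mathbb{G}_m$ whose fibre over one point is $\mathbb{G}_m\setminus\{\text{pt}\}$ --- a genuine singular fibre that contributes nothing to $\ol\kappa$. So the structure-theoretic step you invoke does not follow from $\ol\kappa(V^0)=0$ alone. Moreover, you appeal to the structure theory of $\mathbb{C}^*$-fibrations on \emph{affine} surfaces, but $V^0$ is not affine (that is what you are trying to prove) and $V$ is not smooth, so neither fits the hypotheses directly. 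The information you are not using is precisely what distinguishes the actual situation from the toy example above: the missing point of $V^0$ is a \emph{singular} point of $V$.

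The paper's finish exploits exactly this. Once $V$ carries a $k^*$-fibration, pass to a resolution $W\to V$ and compactify to a $\mathbb{P}^1$-fibration on a smooth projective surface; the exceptional divisor over each singular point of $V$ sits inside a single fibre, and any connected subtree of a fibre of a $\mathbb{P}^1$-fibration contracts to a rational singularity (\cite[Lemma~2.7]{GM}). Hence all singularities of $V$ are rational, contradicting Theorem~1.2(3). This bypasses the delicate question of which singular fibres of a $\mathbb{C}^*$-fibration actually raise $\ol\kappa$, and is both shorter and more robust than the route you sketch.
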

\begin{proof}
If $u$ is an invertible non-constant regular function on $V^0$, then we have a morphism
$u:V^0
\rightarrow k^*$. By the Stein Factorization theorem there exists a curve $B$
and morphisms $u': V^0 \rightarrow B$ and $\phi: B \rightarrow k^*$ such that
\begin{itemize}
\item $ \phi \circ u' = u$, and
\item the general fibers of $u'$ are irreducible.
\end{itemize}
The curve $B$ is affine because $k^*$ is affine.  For the morphism
$u'$ we have Kawamata's inequality: $\ol \kappa(V^0) \geq \ol \kappa(B) + \ol
\kappa(F)$ (\cite{Kw1}), where $F$ is a general fiber of $u'$. Thus $0 \geq \ol \kappa(B) + \ol \kappa(F)$. There 
is a dominant morphism $\phi: B \rightarrow k^*$, hence $\ol \kappa(B) \geq 0$. Therefore, $0 \geq
\ol \kappa(B) + \ol \kappa(F) \geq \ol \kappa(F)$. If $\ol \kappa(F)= -\infty$
then $\ol \kappa(V^0)=-\infty$, because by Iitaka's easy addition theorem $\ol
\kappa(V^0) \leq \ol \kappa(F)+1$ \cite[Theorem 10.4]{I}. Hence $\ol \kappa(F) =0$. Thus the
morphism $u' : V^0 \rightarrow B$ is a $k^*$-fibration. Because codimension of $V - V^0
\geq 2$ and $V$ is affine, $u'$ extends to $V$ \cite[Theorem 2.18]{I}.
Thus $V$ has a $k^*$-fibration. Hence the singularities of $V$ will be rational, since it is well-known,
\cite[Lemma 2.7]{GM}, that any connected subtree of a singular fiber of a $\mathbb{P}^1$-fibration on a 
smooth projective surface contracts to a rational singular point. This contradicts Theorem 1.2. 
\end{proof}

This completes the proof of Lemma 2.2.

\subsection{The Case $\ol \kappa(V^0)=1$}
In this subsection we will prove the following.
\begin{lemma}
Let $V$ be an affine surface such that
\begin{enumerate}
\item $V^0$ has a trivial tangent bundle, and
\item $\ol \kappa(V^0)=1$.
\end{enumerate}
Then $V$ is smooth.
\end{lemma}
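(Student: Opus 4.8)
The plan is to mimic the structure of the $\ol\kappa(V^0)=0$ argument, but now exploit the Iitaka fibration attached to $\ol\kappa(V^0)=1$. Let $p_1,\dots,p_r$ be the singular points of $V$, let $\pi\colon\ol V\to V$ be a resolution with SNC exceptional divisor $E$, arranged so that every $(-1)$-curve in $E$ meets at least three other components of $E$, and let $X\supset\ol V$ be a smooth projective completion with $D'=X-\ol V$ SNC; set $D=D'+E$, a disjoint union. Since the tangent bundle of $V^0$ is trivial, there is a canonical divisor $K$ of $X$ supported on $D$, and we may assume no singular point of $V$ is rational by Theorem 1.2(3). Write the Zariski--Fujita decomposition $K+D\approx P+N$. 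Because $\ol\kappa(V^0)=1$, the pseudo-effective divisor $K+D$ is not numerically trivial, so $P\not\approx 0$; in fact $P$ is a nef $\Q$-divisor with $P^2=0$ (the Iitaka fibration being a curve), and a suitable multiple of $P$ is the pullback of an ample divisor under the Iitaka fibration $g\colon X\to B$ onto a smooth projective curve $B$, whose general fiber $F$ satisfies $(K+D)\cdot F=0$.

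First I would analyze how $D=D'+E$ meets a general fiber $F$ of $g$. Since $P\cdot F=0$ and $P\approx K+D-N$, and since $N$ is supported away from a general fiber, $(K+D)\cdot F=0$; combined with $K\cdot F$ being even (adjunction on $F$) this pins down $D\cdot F$ and the genus of $F$. The key structural input is that $V$ is affine, so $\ol V$ contains no complete curve; hence no fiber component of $g$ (nor any connected configuration of them) can lie entirely in $\ol V$, forcing every fiber of $g|_{\ol V}$ to meet $D$, and in particular $D'$. I expect to deduce that $F$ is rational or elliptic and that $D'$ (and possibly $E$) contains sections or multisections of $g$, using Zariski's Lemma to control which components sit in fibers. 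The affineness, together with the fact that $D'$ supports an ample divisor on some compactification and hence has a positive eigenvalue in its intersection form, should again let me contract $(-1)$-curves meeting $D$ as in Lemma 1.4 and reduce to the ``$Bk(D)=N$'' situation, then invoke the classification in Lemma 1.3 component by component.

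The heart of the argument, as in the $\ol\kappa=0$ case, will be to extract a $k^*$-action or an affine-ruling on $V$ that contradicts the hypotheses. Concretely: once we know the Iitaka fibration restricts to a fibration on $V^0$ with general fiber an affine curve, I would identify that fiber. If the general fiber of $g|_{V^0}$ is $\A^1$, then $V^0$ is affine-ruled, the $\A^1$-fibration extends to $V$ (using codimension $\geq 2$ of the complement and $V$ affine), and all singularities of $V$ are rational, contradicting Theorem 1.2. If instead the general fiber is $k^*$, i.e. $V^0$ carries a $k^*$-fibration over an affine base, the same extension argument gives a $k^*$-fibration on $V$ and hence rational singularities, again a contradiction; this is exactly the mechanism of Lemma 2.3, which I would re-use. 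The remaining possibility is that the general fiber has genus $\geq 1$ or more than two punctures, but then additivity of logarithmic Kodaira dimension (Kawamata, and Iitaka's easy addition $\ol\kappa(V^0)\leq\ol\kappa(F)+1$) together with $\ol\kappa(B)$ forces $\ol\kappa(V^0)\geq 2$ unless the fiber and base contributions are tightly constrained; sorting out this numerology to leave only the $\A^1$- and $k^*$-fiber cases is where I expect the real work to lie.

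The main obstacle, I anticipate, is the bookkeeping needed to show that when $N\neq 0$ one still gets a contradiction: in the $\ol\kappa=0$ proof, rationality of $X$ made numerical and linear equivalence coincide and gave the ``two expressions for $K$'' trick, but here the base $B$ of the Iitaka fibration need not be rational, so I cannot assume $X$ is rational. I would instead argue directly with the Zariski--Fujita decomposition: the triviality of the tangent bundle gives an integral canonical divisor supported on $D$, while $K+D\approx P+N$ with $N$ having all coefficients in $(0,1)$ and $P$ nef but nonzero; pairing with fiber components and with the ample divisor on $D'$, together with the contraction reduction to $Bk(D)=N$, should still force $N=0$, after which Lemma 1.3 leaves only finitely many possible shapes for the connected components of $D$ and each is then excluded by the affineness plus the fibration analysis above. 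Verifying that this chain closes without the rationality crutch is the step I would scrutinize most carefully.
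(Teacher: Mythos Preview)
Your proposal has a genuine gap at the extension step. You write that when $V^0$ carries a $k^*$-fibration, ``the same extension argument gives a $k^*$-fibration on $V$ and hence rational singularities,'' invoking the mechanism of Lemma~2.3. But the extension in Lemma~2.3 works because there the base $B$ dominates $k^*$ and is therefore \emph{affine}: a morphism from $V^0$ to an affine target is given by regular functions, which extend across the codimension-two singular locus by normality of $V$. For the $k^*$-fibration produced by Kawamata's structure theorem when $\ol\kappa(V^0)=1$, the base need not be affine; the morphism $V^0\to B$ can have $B$ a projective curve, and then it does \emph{not} extend across a singular point of $V$ (think of the cone over an embedded curve of positive genus, minus its vertex). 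This is exactly the situation the paper isolates as its Case~(2), and it is where the actual content of the proof lies. Your appeal to Lemma~2.3 does not cover it.

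The paper dispenses with the Zariski--Fujita and $Bk(D)=N$ machinery here and argues geometrically instead. One resolves so that the $k^*$-fibration on $V^0$ extends to a $\mathbb{P}^1$-fibration $f$ on a smooth projective completion $X$, and observes that each of $E$ and $D'$ contributes exactly one horizontal component, a cross-section of $f$. Using affineness of $V$ (no complete curves in $\ol V$) and the tree structure of singular fibers, one shows that each singular fiber contains a unique component meeting both $D'$ and $E$, that this component is a $(-1)$-curve, and then by \cite[Lemma~7.6]{F} that the fiber is a \emph{linear} chain. Contracting these chains gives a $\mathbb{P}^1$-bundle with two disjoint sections, hence a global $k^*$-action fixing them; since each blow-up center is a fixed point of this action, the action lifts back to $X$ and descends to $V$. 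Thus $\Gamma(V)$ is positively graded and Theorem~1.2(1) finishes. Your reduction to $Bk(D)=N$ and the classification of Lemma~1.3 is neither needed nor sufficient: the decisive step is precisely this linear-chain/$k^*$-action argument in the non-extending case, which your outline omits.
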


By a result of Kawamata, \cite[Chapter II, Theorem 2.3]{M81}, there is a $k^*$-fibration on $V^0$. 
Hence we have the following possibilities:

(1) The $k^*$-fibration extends to $V$. \\
In this case by Lemma 2.7 of \cite{GM}, the singularities of $V$ are rational. Then by Theorem 1.2, 
the tangent bundle of $V^0$ cannot be trivial.

(2) The $k^*$-fibration does not extend to $V$.\\

Any base point of the linear pencil in $V$ is at a singularity of $V$ because the base point lies outside 
$V^0$. If a singular point of $V$ is not a base point then it lies inside a fiber of a $\mathbb{P}^1$-fibration. 
By lemma 2.7 of \cite{GM} it is a rational singularity. Hence by Theorem 1.2 the tangent bundle of $V^0$ cannot 
be trivial. Let $\pi: \ol V \rightarrow V$ be a minimal resolution of singularities such that the 
$k^*$-fibration extends to the surface $\ol V$. Let $X$ be a smooth projective completion of $\ol V$. 
Let $E$ be the exceptional divisor of $\pi$. $E$ is not contained in a fiber of the $\mathbb{P}^1$-fibration 
on $X$. The divisor $D' = X- \ol V$ is not contained in a fiber of the $\mathbb{P}^1$-fibration on $X$ because 
its intersection form has a positive eigenvalue. Since $V^0$ has a $k^*$-fibration, both $E$ and $D'$ have
exactly one component horizontal to the $\mathbb{P}^1$-fibration on $X$.\\

Any singular fiber of the $\mathbb{P}^1$-fibration on $X$ will contain components of $E$ or $D'$. Indeed, if $F$ is 
a singular fiber such that no component of $F$ is in $E$, then let $l_1$ be the components of $F$ meeting $E$. 
Since $V$ is affine, $l_1$ also meets $D'$. If there is no component of $D'$ in $F$, then $l_1$ meets the 
horizontal component of $D'$. If $l_2$ is another component of $F$, then $l_2$ also meets the horizontal components 
of $D'$. This means that the horizontal component of $D'$ is not a cross-section, which is a contradiction. 
Thus $l_1$ is the only component of $F$, i.e., $F$ is a smooth fiber.\\
Next we claim that all the singular fibers of the $\mathbb{P}^1$-fibration on $X$ are linear chains of 
rational curves. Every singular fiber $F$ contains a unique rational curve which intersects 
both $D'$ and $E$. Indeed, this follows from the observations that $D',E$ contain cross-sections of 
the $\mathbb{P}^1$-fibration 
and $F$ is a tree of rational curves. We may assume that there are no (-1) curves in $F$ which intersect 
only $D'$, or only $E$ as such curves can be contracted without affecting the triviality of the tangent 
bundle of $V^0$. Also, there are no (-1) curves in $D'$ or $E$ which are in $F$, as they too can be contracted. 
Using Lemma 1.1 we deduce that the unique rational curve in $F$ which meets both $D'$ and $E$ has self-intersection 
$-1$. By Lemma 7.6 of \cite{F}, the fiber $F$ is linear.

The components of this
fiber can be contracted successively to give a $\mathds{P}^1$-bundle with two
sections which we can call $0$ and $\infty$. On each trivialization of this
bundle we have a $k^*$-action. Since the sections do not intersect after the
trivializations have been glued together the transition functions
are multiplication by a scalar. These commute with the multiplication action of
$k^*$, proving that there is a $k^*$-action on the $\mathds{P}^1$-bundle that
leaves the two sections invariant. We can now blow up at the two sections to
reverse the blowing down process. Any point of intersection of two irreducible components of the new singular
fiber is fixed for the $k^*$-action. Hence the action extends to the blow up $\ol V$. This
proves that the surface $V$ is a graded domain, in which case we know the
conjecture to be true by Theorem 1.2. This completes the proof of Lemma 2.4.

\section{Proof of Assertion (2) of Theorem 0.1}
\begin{lemma}
Let $V$ be a normal projective surface. If the tangent bundle of $V^0$
is trivial, then $\ol \kappa(V^0) \leq 0$.
\end{lemma}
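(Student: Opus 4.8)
The plan is to argue by contradiction: suppose $V$ is a normal projective surface with trivial tangent bundle on $V^0$ but $\ol\kappa(V^0)\geq 1$. First I would fix a resolution of singularities $\pi\colon X\to V$ with exceptional divisor $E=\pi^{-1}(\mathrm{Sing}(V))$ an SNC divisor, so that $X$ is a smooth projective surface, $V^0\cong X-E$, and $D:=E$ plays the role of the boundary divisor; we may as well assume $E$ is minimal in the sense that no $(-1)$-curve in $E$ meets at most two other components. By Theorem 1.2(3) we may assume none of the singularities of $V$ is rational, so in particular $E$ is nonempty (else $V$ would be smooth) and, by the usual contractibility criteria, the intersection form on $E$ is negative definite. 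The triviality of the tangent bundle of $V^0$ means $K_X + E$ (the log canonical divisor) has a canonical section supported on $E$: more precisely, since $\Omega^2$ of $V^0$ is trivial, $K_X$ is linearly equivalent to an effective divisor supported on $E$, i.e. $K_X \approx -\sum a_i E_i$ with suitable integers — and this is the structural fact I want to exploit.

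The heart of the argument should be a comparison between $\ol\kappa(V^0)=\ol\kappa(X-E)$ and the Iitaka dimension of $K_X$ itself. By definition $\ol\kappa(X-E)$ measures $|n(K_X+E)|$ for large $n$. If this is $\geq 1$ then $K_X+E$ is pseudo-effective and we have its Zariski–Fujita decomposition $K_X+E\approx P+N$ with $P$ nef; since $\ol\kappa\geq 1$, $P$ is nontrivial and $P^2\geq 0$ with $P$ defining the Iitaka fibration. Now I would use that $K_X$ is (linearly, hence numerically) equivalent to a divisor supported on $E$, which is contracted by $\pi$; intersecting $P$ with a general fiber $f$ of a completion/auxiliary fibration, or more directly intersecting with the components $E_i$, should force a contradiction. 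The cleanest route: from $K_X\approx -\sum a_i E_i$ we get $K_X+E \approx E - \sum a_i E_i = \sum(1-a_i)E_i$, a divisor supported entirely on $E$. But the intersection form on $E$ is negative definite, so \emph{any} nonzero $\Q$-divisor supported on $E$ has negative self-intersection and cannot be nef unless it is zero; hence in the Zariski decomposition $P=0$ and $N = \sum(1-a_i)E_i$, giving $\ol\kappa(V^0)=\ol\kappa$ of the zero divisor's multiples, which is $\leq 0$. This is the contradiction.

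The step I expect to be the main obstacle is justifying that $K_X$ is numerically equivalent to a divisor supported on $E$ — that is, extracting from "the tangent bundle of $V^0$ is trivial" the conclusion that the canonical bundle of $X$ has the desired form. The triviality gives a nowhere-vanishing holomorphic $2$-form on $V^0$, i.e. a trivialization of $K_{V^0}$; pulling back via $\pi$ and taking its divisor of zeros and poles on $X$, one gets $K_X = \pi^*(0) + (\text{divisor supported on }E)$, so $K_X$ is indeed supported on $E$ up to linear equivalence. I would need to be careful that this holds for a normal (not necessarily smooth) $V$: here one uses that $V$ has canonical (in fact Gorenstein, since the singularities are Gorenstein as noted after Theorem 1.2) singularities precisely when $\pi^*K_V = K_X + (\text{effective, supported on }E)$ with the discrepancies controlled, but since $K_V$ is trivial on $V^0$ we only need that $K_X$ restricted to $X-E$ is trivial, which is immediate. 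Once this structural fact is in hand, the negative-definiteness of the intersection form on $E$ closes the argument quickly; I would also double-check that the reduction via Theorem 1.2(3) to the non-rational case is what guarantees negative-definiteness rather than merely negative-semidefiniteness.
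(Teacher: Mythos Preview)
Your strategy --- observe that $K_X+E$ is linearly equivalent to a divisor $\Delta=\sum(1-a_i)E_i$ supported on the negative-definite exceptional set $E$, and conclude $\ol\kappa\leq 0$ --- is sound, but the Zariski--Fujita step as you wrote it has a genuine gap. You argue that ``any nonzero $\Q$-divisor supported on $E$ \ldots\ cannot be nef unless it is zero; hence in the Zariski decomposition $P=0$.'' The first clause is correct, but it does not yield the second: in the decomposition $K_X+E\approx P+N$ the negative part $N$ is a specific effective $\Q$-divisor, and nothing you have said forces its support to lie in $E$. Consequently $P=(K_X+E)-N$ need not be supported on $E$, and your nefness observation does not apply to it. (Equivalently, you are implicitly assuming $\sum(1-a_i)E_i$ is already effective so that one may take $N=\Delta$, but you have not established $a_i\geq 1$; that is precisely what the paper extracts from the \emph{minimal} resolution.)

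There is, however, a one-line repair that stays within your framework and bypasses Zariski--Fujita entirely. Since $K_X+E\sim\Delta$ with $\mathrm{Supp}(\Delta)\subset E$, any section $s\in H^0(X,\,m(K_X+E))=H^0(X,\,m\Delta)$ is a rational function regular on $X\setminus E=V^0$; as $V$ is normal and projective and $\mathrm{codim}(V\setminus V^0)\geq 2$, such a function extends to $V$ and is constant. Thus $h^0(m(K_X+E))\leq 1$ for all $m\geq 1$, giving $\ol\kappa(V^0)\leq 0$ directly. This is genuinely different from the paper's argument, which instead works on the \emph{minimal} resolution $M$: there minimality gives $K_M\cdot E_i\geq 0$, whence (by negative-definiteness) $-K_M$ is effective with support equal to $E$, indeed $-K_M\geq Z\geq E_{\mathrm{red}}$; passing to an SNC model one then has the honest inequality $K+E\leq 0$, from which $\ol\kappa\leq 0$ is immediate. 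The paper's route yields the finer information $K+E\leq 0$ (used later), while your corrected route is shorter for the bare inequality on $\ol\kappa$.

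One minor point: your closing worry about negative-definiteness is misplaced. For \emph{any} resolution of a normal surface singularity the intersection form on the exceptional divisor is negative definite (Mumford/Grauert); this has nothing to do with Theorem~1.2(3) or with excluding rational singularities.
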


\begin{proof}
Let $p_1,p_2,\cdots ,p_r$ be the singular points of $V$. Let $\pi: M \rightarrow V$
be a minimal resolution with the exceptional divisor $E =
\pi^{-1}\{p_1,\cdots ,p_r\}$. Let $K$ be the canonical divisor of $M$.\\
We claim that $K \cdot E_i \geq 0$ for all components $E_i$ of $E$.
Suppose $K \cdot E_i \textless 0$ for some $E_i$. By the adjunction formula
$(E_i^2+E_i \cdot K)/2 +1 \geq 0$; this forces $E_i^2=-1$. This contradicts the assumption that the resolution
$\pi$ is minimal.  Thus $K \cdot E_i \geq 0$. Since $E$ has a negative-definite
intersection form, we deduce that $K$ is a negative divisor supported on $E$. Thus $-K \geq 0$ and $-K
\cdot E_i \leq 0$. If $K$ is equivalent to the zero divisor, then $p$ is a
rational double point; in that case by Theorem 1.2 the tangent bundle of $V^0$ cannot be
trivial. Hence $-K \geq Z$, where $Z$ is the fundamental
cycle of $E$. Therefore, the support of $K$ is the union of all the components of $E$.
We perform further blowups on $M$,
$$
\pi_1:M_1 \rightarrow M\, ,
$$
so that $E_{M_1}:=\pi_1^{-1}(E)$ is an SNC divisor and $\pi_1$ is minimal with
respect to this property. Note that $K_{M_1}$ is negative,
and the support of $K_{M_1}$ contains $|\pi^{-1}(E)|$. Hence $K_{M_1}+E_{M_1} \leq 0$. If $K_{M_1}+E_{M_1} \textless 0$ then
$\ol \kappa(V^0) = -\infty$. If $K_{M_1} + E_{M_1}=0$ then $\ol \kappa(V^0) =
0$.
\end{proof}

In view of the above lemma it is enough to verify the conjecture for the cases
$\ol \kappa(V^0)=0$ and $\ol \kappa(V^0)= -\infty$.\\ 

We also have the following:

\begin{lemma}
Let $V$ be a normal projective surface with $\ol \kappa(V^0) \leq 0$.
If the tangent bundle of $V^0$ be trivial, then $V$ has at most two
singularities.
\end{lemma}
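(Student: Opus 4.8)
The plan is to argue much as in Lemma 3.1, keeping careful track of the contributions of the different connected components of the exceptional divisor $E$ to the canonical bundle formula. Let $p_1,\ldots,p_r$ be the singular points of $V$, let $\pi:M\to V$ be a minimal resolution, and let $E=\pi^{-1}\{p_1,\ldots,p_r\}=\bigsqcup_{j=1}^r E^{(j)}$ be the decomposition into connected components, where $E^{(j)}=\pi^{-1}(p_j)$. By Theorem 1.2(3) none of the $p_j$ is a rational singularity, so as in the proof of Lemma 3.1 we have $K\cdot E_i\ge 0$ for every component $E_i$ of $E$, the divisor $-K$ is effective and supported on all of $E$, and in fact $-K\ge Z=\sum_j Z^{(j)}$, where $Z^{(j)}$ is the fundamental cycle of $E^{(j)}$. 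Passing to a further blowup $\pi_1:M_1\to M$ making $E_{M_1}=\pi_1^{-1}(E)$ an SNC divisor, minimally, we again get $K_{M_1}+E_{M_1}\le 0$, with equality precisely when $\ol\kappa(V^0)=0$, and $K_{M_1}+E_{M_1}<0$ when $\ol\kappa(V^0)=-\infty$.

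Now I would localize this analysis to each connected component. Write $K_{M_1}=\sum_j K^{(j)}$ where $K^{(j)}$ is the part of the (effective, negative-of-canonical) divisor $-K_{M_1}$ supported on the component $E^{(j)}_{M_1}$ of $E_{M_1}$ lying over $p_j$; since the components $E^{(j)}$ are disjoint this is an honest decomposition, and each $-K^{(j)}\ge Z^{(j)}_{M_1}>0$ is strictly positive and supported on the whole of $E^{(j)}_{M_1}$. From $K_{M_1}+E_{M_1}\le 0$ we get, componentwise, that $K^{(j)}+E^{(j)}_{M_1}\le 0$ for every $j$; equivalently each exceptional component $E^{(j)}$ resolves a singularity $p_j$ whose log-canonical divisor (on the SNC model) is antieffective. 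The key point is that if $r\ge 3$, then having $r$ disjoint such configurations forces $\ol\kappa(V^0)$ to be genuinely negative: I expect that a single "$K+D\le 0$" configuration of this type (a non-rational singularity whose SNC resolution has antieffective log canonical divisor) already makes $\ol\kappa$ of its complement equal to $-\infty$, and that the logarithmic Kodaira dimension is subadditive (or at least not superadditive) over the disjoint union of such components together with whatever positive contribution the rest of $M_1$ makes — so with three or more such components one cannot have $\ol\kappa(V^0)=0$ and one certainly cannot have $\ol\kappa(V^0)\ge 1$, contradicting the hypothesis $\ol\kappa(V^0)\le 0$ unless $\ol\kappa(V^0)=-\infty$; and in the $-\infty$ case Theorem 0.1(2) (which is exactly what is being built up) together with the structure results of \S2.1 will pin down $r\le 1$ there. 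Since the present lemma only claims $r\le 2$, it suffices to rule out $r\ge 3$.

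More concretely, for $r\ge 3$ I would run the following: after the SNC modification, for each $j$ choose a projective completion and use the Zariski--Fujita machinery on $E^{(j)}$ (Lemmas 1.3, 1.4) together with the classification of the possible shapes of $K^{(j)}+E^{(j)}\le 0$ configurations. A non-rational normal surface singularity whose fundamental cycle satisfies $-K\ge Z$ is, after one SNC blowup, either an elliptic (or minimally elliptic) singularity or a "cusp"-type configuration; in all of these the local contribution to $K_{M_1}+E_{M_1}$ is $0$ (the minimally elliptic/cusp case, $K^{(j)}+E^{(j)}=0$) or strictly negative (the more degenerate case). If some component contributes strictly negatively then $K_{M_1}+E_{M_1}<0$ globally and $\ol\kappa(V^0)=-\infty$, so every component must contribute exactly $0$, i.e.\ every $p_j$ is minimally elliptic-type. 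But then $K_{M_1}+E_{M_1}$, already $\le 0$, restricted away from $E$ is $\le 0$ as well, and I would show that three disjoint cycles-of-$\mathbb P^1$'s (or elliptic curves) each of log-canonical degree zero cannot coexist on a projective surface with $\ol\kappa(V^0)=0$: running the argument of Lemma 2.2 Case 1 in reverse, $\ol\kappa(V^0)=0$ forces a $\mathbb P^1$-fibration with a general fiber $l$ satisfying $D\cdot l=2$, and three disjoint "fundamental-cycle" components each meeting $l$ at least once already gives $D\cdot l\ge 3$, a contradiction. Hence $r\le 2$.

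The main obstacle I anticipate is the "local classification" input: making precise which singularities $p_j$ can appear once we know $-K\ge Z$ and $K^{(j)}+E^{(j)}_{M_1}\le 0$, and verifying that each such configuration contributes non-positively (in fact $0$ in the borderline case) to $K_{M_1}+E_{M_1}$ so that the fiber-intersection count $D\cdot l=2$ can be invoked. This is essentially a Wagreich/Laufer-style analysis of elliptic and minimally elliptic singularities (which is why the excerpt took care to recall those definitions), and the bookkeeping of coefficients when passing from $M$ to the SNC model $M_1$ will be the fiddly part; once that is in hand the global $\mathbb P^1$-fibration argument and the intersection-number contradiction are routine, and give $r\le 2$ directly. (A small side check: when $\ol\kappa(V^0)=-\infty$ the same $D\cdot l=2$ bound, now with the horizontal component of the boundary also hitting $l$, will be what eventually forces $r\le 1$ in part (2) of Theorem 0.1, but for this lemma the weaker bound $r\le 2$ is all that is needed.)
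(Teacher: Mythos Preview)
Your proposal contains genuine gaps and is far more complicated than necessary. The paper's proof is short and uniform in $\ol\kappa(V^0)\in\{-\infty,0\}$: on the \emph{minimal} resolution $M$ one has $-K\ge Z$, so every coefficient of $-K$ in the $E_i$ is at least $1$. Since $\kappa(M)=-\infty$ there is a $\mathbb{P}^1$-fibration; for a general fiber $l$ one has $K\cdot l=-2$, i.e.\ $\sum a_i(E_i\cdot l)=2$ with $a_i\ge 1$. Hence at most two components of $E$ are horizontal. Any connected component of $E$ lying entirely in fibers contracts to a rational singularity (being a connected subtree of a $\mathbb{P}^1$-fiber), which is excluded by Theorem~1.2. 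Thus every connected component of $E$ contains a horizontal curve, so $r\le 2$.

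Your plan misses this direct count and instead tries to classify the possible local shapes of the $E^{(j)}$ (minimally elliptic, cusps, etc.) before invoking the fiber intersection; none of that classification is needed. More seriously, your treatment of the case $\ol\kappa(V^0)=-\infty$ is not a proof: you appeal to ``Theorem~0.1(2) together with the structure results of \S2.1'' to get $r\le 1$, but Lemma~3.2 is precisely part of the proof of Theorem~0.1(2), so this is circular. Your parenthetical fix (``the horizontal component of the boundary also hitting $l$'') is confused: $V$ is projective here, so there is no boundary divisor $D'$ at infinity, and your $D\cdot l=2$ heuristic borrowed from the affine Lemma~2.2 does not transplant as stated. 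The correct uniform bookkeeping is with $K\cdot l$, not $(K+E)\cdot l$ or $D\cdot l$, using the integrality $a_i\ge 1$ coming from $-K\ge Z$; once you use that, the case split on $\ol\kappa$ and the Wagreich/Laufer analysis become unnecessary.
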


\begin{proof}
The canonical bundle of $V^0$ is trivial because its tangent bundle is
trivial. Thus a canonical divisor of $\ol V$ is supported  on $E =
\pi^{-1}\{p_1,p_2,\cdots ,p_n\}$, where $p_i$ are the singular points of $V$ and
$$\pi:
\ol V \rightarrow V$$ is a minimal resolution. We know that the
support of $K$ contains $E$ since $K \textless 0$ and $-K\geq Z$. Thus 
$\kappa(\ol V) = -\infty$. Hence we have a $\mathbb{P}^1$-fibration on $\ol V$. Let $l$ be a general fiber of the
$\mathbb{P}^1$-fibration. By the adjunction formula $K \cdot l = -2$. We claim that $E$ has at least one
irreducible component which is not contained in any fiber. For, if this is false then we would have $K\cdot l=0$ since 
$K$ is supported on $E$. But $K\cdot l=-2$. This contradiction proves the claim. Now $E$ has components which
are not contained in any fiber. Hence there can be at most two such components of $E$ which are horizontal. Any
connected component of $E$ which lies entirely in a fiber contracts to a rational double
point. Such a singularity cannot exist because the tangent bundle of $V_0$ is
trivial. Hence every connected component of $E$ has a component
which is horizontal to the $\mathbb{P}^1$-fibration. It follows that the number of singularities is at most two.
\end{proof}

Since the tangent bundle of $V^0$ is trivial, $V^0$ cannot contain a
smooth projective rational curve. Indeed, if $\Delta$ is a smooth projective
rational curve in $V^0$ then it cannot have positive or zero self-intersection as it would violate the
adjunction formula. If the self-intersection is negative then by adjunction formula this self-intersection is 
an even number. In this case the curve is
contractible to a rational singularity. This is not possible by Theorem 1.2.

We now state a proposition proved in \cite[\S~6.2]{W} about curves with
negative self-intersection. The proof of this result assumes that $k=\C$. Using complete local rings and a suitable
application of $``$Lefschetz Principle'' Lemma 3.4 below, which uses this result, is valid for any 
algebraically closed field of characteristic $0$.

\begin{theorem}
 Suppose $p$ is an isolated singularity of $V$ and $\pi :M \rightarrow V$ is the
minimal resolution $\pi^{-1}(p)=E$ is a non-singular curve of genus $g$ and $E
\cdot E \textless 4-4g$. Let $N$ be the normal bundle of $E$ in $M$. Then there
is
a neighbourhood (in the complex topology) $U$ of $E$ in $M$ and a neighbourhood
$U'$ of $E$ in the total space of $N$ and an analytic isomorphism $\sigma: U
\rightarrow U'$, such that $\sigma$ is identity on $E$.
\end{theorem}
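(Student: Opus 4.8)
The statement is a formal/analytic linearization theorem: if $E$ is a smooth curve of genus $g$ embedded in the smooth surface $M$ as the exceptional locus of the minimal resolution of an isolated singularity, and $E^2 < 4-4g$, then a neighborhood of $E$ in $M$ is analytically isomorphic, rel $E$, to a neighborhood of the zero section in the total space of the normal bundle $N$. Since the theorem is attributed to Wagreich \cite[\S~6.2]{W} and the paper explicitly says the proof there assumes $k=\C$, the plan is \emph{not} to reprove it from scratch but rather to (a) recall the structure of Wagreich's argument and (b) indicate the reduction-to-$\C$ step that makes it applicable over an arbitrary algebraically closed field of characteristic $0$.

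First I would set up the obstruction-theoretic formal argument over $\C$. Let $I$ be the ideal sheaf of $E$ in $M$ and consider the successive infinitesimal neighborhoods $E_n = (E, \mathcal{O}_M/I^{n+1})$. One wants to construct compatible isomorphisms $\sigma_n : E_n \xrightarrow{\sim} (E, \mathcal{O}_N/J^{n+1})$ (where $J$ is the ideal of the zero section in $N$) extending the identity on $E$. At each stage the obstruction to extending $\sigma_{n-1}$ to $\sigma_n$, and the ambiguity in the choice of extension, live in the cohomology groups $H^1(E, \Theta_M|_E \otimes I^n/I^{n+1})$ and $H^0$ of the same sheaf respectively; since $I/I^2 = N^\vee$ and $\Theta_M|_E$ sits in $0 \to \Theta_E \to \Theta_M|_E \to N \to 0$, these are built from $H^\bullet(E, \Theta_E \otimes (N^\vee)^{\otimes n})$ and $H^\bullet(E, N^{\otimes(1-n)})$. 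The hypothesis $E^2 < 4-4g$, i.e. $\deg N < 4-4g$, i.e. $\deg(N^\vee) > 4g-4 = \deg K_E + (2g-2)$, forces all the relevant positive-degree line bundles to be sufficiently negative (using $\deg \Theta_E = 2-2g$) that the obstruction $H^1$'s vanish for every $n \geq 2$. This gives a \emph{formal} isomorphism $\widehat{\sigma}$ of the completions along $E$. One then invokes a convergence/algebraization theorem — Grauert's theorem on the existence of analytic structures, or Artin approximation in the complex-analytic setting — to upgrade $\widehat\sigma$ to an honest analytic isomorphism on a suitable complex neighborhood $U$ of $E$; the key point enabling convergence is again the negativity of $N$, which makes $E$ exceptional (contractible), so formal functions extend to analytic ones on a neighborhood.

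The remaining, and essentially bookkeeping, step is the descent to an arbitrary algebraically closed $k$ of characteristic $0$. Here I would argue as the paper suggests: the whole linearization statement is about the completion $\widehat{\mathcal{O}}_{M,E}$ of $M$ along $E$ together with the curve $E$ and the bundle $N$, all of which are of finite type and defined by finitely many polynomial equations over $k$. Since $M$, $E$, $V$, the minimal resolution $\pi$, and the numerical condition $E^2 < 4-4g$ are all defined over a subfield of $k$ finitely generated over $\Q$, and since any two algebraically closed fields of the same characteristic $0$ and sufficiently large cardinality are elementarily equivalent (Lefschetz principle), an isomorphism of complete local/formal structures rel $E$ over $\C$ yields one over $k$: the existence of such an isomorphism is expressible as a first-order statement, or more concretely one spreads out the data to a finitely generated $\Q$-algebra, specializes a chosen embedding into $\C$, applies the complex case, and descends. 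Since the conclusion we actually \emph{use} later (in Lemma 3.4) only concerns the formal/complete-local neighborhood, this suffices.

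The main obstacle is not the Lefschetz-principle descent — that is routine once one is careful that the target statement is purely formal — but rather making the vanishing bookkeeping in the obstruction calculation airtight: one must check that the inequality $\deg N < 4-4g$ is \emph{exactly} what is needed to kill $H^1(E, \Theta_E \otimes (N^\vee)^{\otimes n})$ and the relevant $H^1(E, N^{\otimes(1-n)})$ for \emph{all} $n \geq 2$ simultaneously (including the first potentially-bad case $n=2$, which is where the sharp constant $4-4g$ enters), and that the formal isomorphism so produced genuinely converges. I would simply cite \cite[\S~6.2]{W} for this analytic core and confine the new content to the paragraph above on reducing to $k=\C$.
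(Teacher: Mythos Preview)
Your proposal is correct and aligns with the paper's own treatment: the paper does not prove this theorem at all but simply cites Wagreich \cite[\S~6.2]{W} for the analytic statement over $\C$, and then remarks (in the sentence preceding the theorem and in Lemma~3.4) that a Lefschetz-principle argument via complete local rings transfers the \emph{application} to arbitrary algebraically closed $k$ of characteristic $0$. Your write-up goes further than the paper by actually sketching the obstruction-theoretic mechanism behind Wagreich's proof (infinitesimal extensions, vanishing of $H^1(E,\Theta_M|_E\otimes (N^\vee)^{\otimes n})$ forced by $\deg N<4-4g$, then Grauert-type convergence), but since you explicitly say you would cite \cite{W} for the analytic core and confine the new content to the characteristic-$0$ descent, this is exactly the paper's strategy, only with more expository detail.
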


Using Theorem 3.3 we strengthen Lemma 3.2 as follows.

\begin{lemma}
With the hypothesis as in Lemma 3.2, the surface $V$ can have at most
one singularity.
\end{lemma}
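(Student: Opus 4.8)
The plan is to show that under the hypotheses of Lemma 3.2 (namely $V$ normal projective, tangent bundle of $V^0$ trivial, $\ol\kappa(V^0)\le 0$), if $V$ had two singular points then Theorem 3.3 would force a contradiction. By Lemma 3.2 we already know $V$ has at most two singularities, so assume for contradiction that $V$ has exactly two singular points $p_1,p_2$, with minimal resolution $\pi:\ol V\to V$ and exceptional divisor $E=E^{(1)}\sqcup E^{(2)}$, where $E^{(j)}=\pi^{-1}(p_j)$. As established in the proof of Lemma 3.2, a canonical divisor $K$ of $\ol V$ is supported on $E$, is negative, satisfies $-K\ge Z$ (fundamental cycle), and $\ol V$ carries a $\p^1$-fibration $f:\ol V\to B$; moreover every connected component of $E$ contains a component horizontal to $f$, and since there are at most two horizontal components of $E$ in total, each $E^{(j)}$ contains \emph{exactly one} horizontal component, call it $S_j$, and $S_j$ is a cross-section of $f$ (it meets a general fiber $\ell$ in exactly one point, as $E\cdot\ell=2$ and the two sides split this as $1+1$).

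Next I would argue that each $E^{(j)}$ is \emph{irreducible}, equal to $S_j$. Suppose $E^{(1)}$ had a component $A$ contained in a fiber $F$ of $f$. Then $A$ is a smooth rational curve with $A^2<0$, hence (by the adjunction/contractibility remarks already in the paper, via \cite[Lemma 2.7]{GM}) the connected subtree of $E^{(1)}$ lying in fibers would contract to a rational singular point; but the total contraction $\pi$ of $E^{(1)}$ produces the non-rational singularity $p_1$, and one checks that the vertical part cannot be entangled with $S_1$ in a way that avoids producing a rational double point somewhere — more precisely, since $S_1$ is a cross-section and $-K\ge Z$ forces every component of $E^{(1)}$ to appear in $K$ with positive coefficient, a standard computation with Zariski's Lemma on $F$ shows the vertical components would have to be $(-2)$-curves forming an ADE configuration, contradicting $-K\ge Z$ with the specific fundamental-cycle inequality for a non-rational singularity. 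Hence $E^{(1)}=S_1$ is a single smooth curve that is a cross-section of $f$; likewise $E^{(2)}=S_2$. In particular both singularities are resolved by a single smooth curve, so by the genus/self-intersection constraints they are the kind of singularities to which Theorem 3.3 applies once we verify $E^{(j)}\cdot E^{(j)}<4-4g_j$ where $g_j$ is the genus of $S_j$: this follows because $p_j$ is not rational (so $g_j\ge 1$ or the self-intersection is very negative) combined with $-K\ge Z$, which on a single irreducible curve $S_j$ of genus $g_j$ with $S_j^2=-n_j$ reads $n_j\ge 2g_j-2+n_j$ type inequalities forcing the strict bound.

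Now comes the main step. Apply Theorem 3.3 at, say, $p_2$: a complex-analytic neighborhood of $S_2$ in $\ol V$ is isomorphic, rel $S_2$, to a neighborhood of the zero section in the total space of the normal bundle $N_{S_2}$. In particular this neighborhood deformation-retracts onto $S_2$ and contains arbitrarily small $f$-saturated neighborhoods whose only curves are multisections pulled back from $S_2$. The key point is that $f$ restricted to such a tubular neighborhood of $S_2$, together with the fact that $S_2$ is a cross-section, lets us run the ruled-surface argument: over a neighborhood of $f(S_2)=B$ the fibration is a $\p^1$-bundle trivialized away from $S_2$, and one shows the second cross-section $S_1$, living in a \emph{disjoint} tubular neighborhood, forces the bundle $\ol V\to B$ to be globally a $\p^1$-bundle with two disjoint sections $S_1,S_2$ — because any singular fiber $F$ would have a component meeting one of the $S_j$ and, being vertical, could not reach the other, yet $F$ must be connected and meet \emph{both} sections (each $S_j$ meets every fiber), contradiction, so there are no singular fibers. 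A $\p^1$-bundle with two disjoint sections is, as in the earlier $\ol\kappa=0$ argument, $\p(\mathcal O\oplus L)$ for a line bundle $L$ on $B$, and $\pi$ contracts $S_1$ and $S_2$. But $\pi$ is birational and proper with connected fibers onto the \emph{normal} surface $V$; contracting the two disjoint sections of $\p(\mathcal O\oplus L)$ yields a surface which is smooth unless $S_j^2\ne \pm$(something making it a cyclic quotient) — and in fact contracting a section $S$ with $S^2=-n$ gives a cyclic quotient singularity $\tfrac1n(1,?)$, which is \emph{rational}, contradicting Theorem 1.2(3). Therefore two singularities is impossible and $V$ has at most one.

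The hard part, and the reason Theorem 3.3 is invoked, is precisely the passage from "each exceptional fiber is a single cross-section" to "$\ol V\to B$ has no singular fibers at all": a priori the $\p^1$-fibration could have singular fibers disjoint from both $S_1$ and $S_2$, or meeting only vertical components — ruling these out is where one needs the analytic linearization near $S_2$ to pin down the local structure of the fibration, since the purely algebraic/numerical input (Zariski's Lemma, $-K\ge Z$, the bark machinery) constrains $E$ but does not by itself eliminate extra vertical configurations elsewhere on $\ol V$. Once the fibration is shown to be a genuine $\p^1$-bundle, the contradiction with normality via rationality of cyclic quotient singularities is immediate from Theorem 1.2(3).
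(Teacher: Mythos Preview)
Your outline has two genuine gaps that the paper handles very differently.

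\textbf{1. The ``no singular fibers'' step is wrong.} You argue that a singular fiber $F$ would have a component meeting one $S_j$ which ``being vertical, could not reach the other,'' and then invoke connectedness of $F$. But nothing forbids $F$ from having two components, one meeting $S_1$ and the other meeting $S_2$, intersecting each other; that configuration is connected and meets both sections. In fact this is exactly what happens: the paper proves that every singular fiber is a pair of $(-1)$-curves meeting transversally, one touching each section. More to the point, a $\mathbb{P}^1$-bundle cannot carry two disjoint sections of negative self-intersection (if $S_1^2=-e<0$ and $S_1\cdot S_2=0$ then $S_2\sim S_1+ef$ and $S_2^2=e>0$), so once you know $E_1,E_2$ are two disjoint sections with $E_i^2<0$, the fibration is \emph{forced} to have singular fibers. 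Theorem 3.3 is a local analytic statement about a tubular neighborhood of one section; it tells you nothing about the global structure of $f$ and cannot be used to rule out singular fibers elsewhere.

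\textbf{2. The final contradiction is wrong.} You conclude by saying that contracting a section $S$ with $S^2=-n$ yields a cyclic quotient singularity $\tfrac{1}{n}(1,\cdot)$, hence rational. That is only true when $S$ is a smooth \emph{rational} curve. If $S$ has genus $1$ (which is precisely the case that survives after you rule out $g=0$), contracting it gives a simple elliptic singularity, which is not rational, so Theorem 1.2(3) gives no contradiction.

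The paper's route is: first prove $E_1,E_2$ have no vertical components by showing (via adjunction with the negative $K$ supported on $E$) that any component of a singular fiber outside $E$ is a $(-1)$-curve, and analyzing the possible configurations; then note the fibration is not a bundle; then contract the $(-1)$-curves meeting $E_2$ to a minimal model, where one computes from $K+E\le 0$ that $g(E_i)\le 1$. Genus $0$ is excluded because the singularity would be rational. For genus $1$, Theorem 3.3 is used in a completely different way: a neighborhood of $E_1$ is analytically isomorphic to a neighborhood of the zero section in the total space $N$ of its normal line bundle; the complement of the zero section in $N$ then has trivial tangent bundle, but contracting the zero section of a line bundle over an elliptic curve yields an affine variety with a $k^*$-action, i.e.\ a positively graded ring, and Theorem 1.2(1) gives the contradiction. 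Your proposal never reaches the genus dichotomy and never uses Theorem 3.3 in this way.

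Your irreducibility argument for $E^{(j)}$ is also only a sketch (``a standard computation with Zariski's Lemma\ldots forming an ADE configuration''); the actual mechanism is the one above, using that $V^0$ contains no complete rational curves and adjunction against $K$ supported with negative coefficients on $E$.
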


\begin{proof}
 Suppose that $p_1$ and $p_2$ are two distinct singular points. Let $E = E_1
\cup E_2$. From the arguments in the proof of Lemma 3.2 it follows that each $E_i$ contains a unique component which is a 
cross-section for the $\mathbb{P}^1$-fibration on $\ol X$. It follows for $i=1,2$ that the part of $E_i$ 
contained in a singular fiber is connected.\\ 

{\it Claim 1}. $E_1$ and $E_2$ do not contain any components that are vertical to the $\mathbb{P}^1$-fibration. 

For suppose that $F$ is a singular fiber of the $\mathbb{P}^1$-fibration that contains 
a component of $E_1$ (a similar proof works for $E_2$). Then $F$ is reducible and every component of F has 
negative self-intersection. Now $E_1$ and $E_2$ are disjoint and the
fibers of the fibration are connected. The intersection form on a fiber cannot be negative definite. Hence there 
exists a smooth rational curve $h$, not contained in $E_1,E_2$, such that $h \subset F$ and 
$h \cdot E_1 \neq 0$ or $h \cdot E_2 \neq 0$. There cannot exist a complete smooth rational curve on $V^0$. 
Hence any component $h'$ of $F$ must meet at least one of $E_1,E_2$. 

For the proof of Claim 1 we need the following.\\

{\it Claim 2.} $F$ contains at most two rational curves $h_1$ and $h_2$ that are not contained in $E$. 

For, if this is not true then let $h_1,h_2,h_3,\cdots$ be components of $F$ which are not contained in $E_1\cup E_2$. 
Each $h_i$ is a (-1) curve. For, let $h_1$ meet a component of $E_1$ whose coefficient
in $K$ is $-a$ where $a \textgreater 0$. Then by the adjunction formula we deduce that $h_1 \cdot h_1 =-1$ 
(since $h_1.\cdot h_1<0$). This also proves that $h_i$ cannot meet both $E_1,E_2$. Finally, by connectedness of $F$ 
it follows that there exist $h_i\neq h_j$ such that they meet. But then $h_i\cup h_j$ is a full
singular fiber since $(h_i+h_j)\cdot (h_i+h_j)=0$. This proves the Claim 2, and also Claim 1 when $F$ contains exactly two 
rational curves which lie outside of $E$.\\

Finally, suppose that there is exactly one component $h$ outside $E$ and inside $F$. Again, connectedness of $F$ 
implies that $h$ meets both $E_1,E_2$. As above, by adjunction formula $h \cdot h =0$. This is because 
$K\cdot h=-2$. But then $h$ is a full fiber.

This proves Claim 1 that there are no vertical components in $E$.\\

Because $E$ consists of two disjoint sections both with negative self intersection, the
$\mathbb{P}^1$-fibration is not a bundle. This is because it is well-known that on a $\mathbb{P}^1$-bundle there is at most
one curve which has a negative self-intersection. By an argument similar to
the one above, we see that every singular fiber is a union of two rational
curves with self intersection (-1), meeting each other transversally. Let
$E_1^2=-m$ and $E_2^2=-n$. For every singular fiber we contract the (-1) curve
meeting $E_2$. In the minimal model $E_1^2=-m$ and $\widetilde{E_2}^2=m$  where $\widetilde{E_2}$ is the image 
of $E_2$. This can be seen by writing $\widetilde E_2$ as $\theta_1E_1 + \theta_2 f$.

We claim that the genus of $E_1$ (and hence also  of $E_2$) is at most 1. We know that $K+E \leq 0$, where $E$ is
supported on $E_1 \cup E_2$. If $K+E=0$ then $K= -E_1-E_2$. As $E_1 \cdot
E_2=0$, by the adjunction formula $E_1$ is an elliptic curve. If $K+E \textless
0$ then $K= -aE_1-bE_2$, where $a$,$b$ are positive integers. By the adjunction
formula we have: $(\widetilde {E_2}^2+ E_2(-aE_1-b\widetilde{E_2}))/2 +1 \geq 0$, $\widetilde{E_2}^2 \textgreater 0$. 
Thus the genus of $E_2$ is at most 1.\\

If the genus of $E_1$ and $E_2$ is 0,
then the two singularities are rational which is not possible by Theorem 1.2.\\ 

Finally, $E_1$ and $E_2$ cannot have genus 1. Indeed, if $E_1$ is a smooth elliptic curve
then by Theorem 3.3 we
can embed a neighbourhood $U$ of $E_1$ in the normal bundle of $E_1$. The normal
bundle of $E_1$ has a $k^*$-action because its completion has two disjoint
sections. The tangent bundle of $U-E_1$ inside the normal bundle is trivial
hence
the tangent bundle of the complement of $E_1$ in the normal bundle is trivial.
But the completion of the normal bundle has two disjoint sections and hence  by
an argument similar to that in Lemma 2.2 the completion has a $k^*$-action which leaves the two sections fixed, hence 
the coordinate ring of the normal bundle is a positively graded domain. Now $E_1$ is a
curve with
negative self-intersection in the normal bundle which can be contracted to a singularity. By Theorem
1.2 (1), the tangent bundle of the complement of $E_1$ cannot be trivial, proving the
claim.\\
\end{proof}

\section{Proof of Assertion (3) of Theorem 0.1}
In this section we will prove the following
\begin{lemma}
Let $V$ be a projective surface over $k$ such that
\begin{enumerate}
\item $V^0$ has a trivial tangent bundle, and
\item $\ol \kappa(V^0)=0$.
\end{enumerate}
Then $V$ is smooth.
\end{lemma}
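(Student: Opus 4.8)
The plan is to mimic the argument of Lemma 2.2, with the exceptional divisor $E$ now playing the role that $D=D'+E$ played there. Suppose $V$ is not smooth; by Lemma 3.4 it has exactly one singular point $p$. Choose a resolution $\pi\colon M_1\to V$ whose exceptional divisor $E:=E_{M_1}$ is connected and SNC and such that (as in the proof of Lemma 2.2) every $(-1)$-curve contained in $E$ meets at least three other components of $E$. Arguing as in the proof of Lemma 3.1, the hypothesis $\ol\kappa(V^0)=0$ forces $K_{M_1}+E=0$, so $-E$ is a canonical divisor of $M_1$; hence the Zariski--Fujita decomposition of $K_{M_1}+E$ has both parts zero. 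In particular $N=0$, and, $N$ having no components, Lemma 1.4 then forces $Bk(E)=N=0$. Lemma 1.3 now applies and, $E$ being connected, shows that $E$ is either a loop of $\mathbb{P}^1$'s or a smooth elliptic curve; equivalently, $(K_{M_1}+E)\cdot E_i=0$ forces $\beta(E_i)=2$ for every rational component $E_i$ and forces an elliptic component to be a whole connected component of $E$. Thus $p$ is a cusp singularity or a simple elliptic singularity.

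Next I would produce a $\mathbb{P}^1$-fibration on $M_1$. Since $-E$ is effective and nonzero, $|nK_{M_1}|=\emptyset$ for all $n\ge1$, so $\kappa(M_1)=-\infty$; moreover $M_1\neq\mathbb{P}^2$, because an anticanonical curve on $\mathbb{P}^2$ is a cubic, of positive self-intersection, whereas $E$ is negative definite. Hence $M_1$ carries a $\mathbb{P}^1$-fibration $f\colon M_1\to C$; a general fibre $l$ satisfies $K_{M_1}\cdot l=-2$, so $E\cdot l=2$. As $E$ is connected, either one component of $E$ is a $2$-section of $f$ or exactly two are sections; in either case $E$ is not contained in a fibre, and a general fibre of $f$ meets $V^0=M_1-E$ in a copy of $k^*$. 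I would also record that $V^0$ contains no complete smooth rational curve: such a curve is disjoint from $E$ and hence, since $K_{M_1}$ is numerically trivial on complete curves contained in $V^0$, a $(-2)$-curve; but then $T_{V^0}$ restricted to it contains $\mathcal{O}_{\mathbb{P}^1}(2)$, so it cannot be trivial. In particular every $(-1)$-curve of $M_1$ meets $E$.

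If $E$ is a smooth elliptic curve, then $E^2<0=4-4\cdot1$, so Theorem 3.3 identifies a neighbourhood of $E$ in $M_1$ with a neighbourhood of the zero section of the total space $N$ of the normal bundle of $E$. Then, exactly as in the proof of Lemma 3.4, the $\mathbb{P}^1$-completion of $N$ has two disjoint sections, so $N$ carries a $k^*$-action fixing the zero section; contracting the zero section $E$ (which has negative self-intersection) yields a positively graded affine surface with an isolated singular point whose smooth locus inherits the trivial tangent bundle of $V^0$, and Theorem 1.2(1) forces this surface to be smooth --- a contradiction.

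The remaining case, $E$ a loop of $\mathbb{P}^1$'s (so $p$ a cusp singularity), is the main obstacle. Here the component(s) of $E$ meeting the general fibre are rational, so $C\cong\mathbb{P}^1$ and $M_1$ is a rational surface, and $f$ restricts to a $k^*$-fibration on $V^0$. The plan is to contract, inside each singular fibre, the vertical components of $E$ together with the $(-1)$-curves (which meet $E$ by the previous step), reducing $f$ to a $\mathbb{P}^1$-bundle $\bar f\colon\mathbb{F}_n\to\mathbb{P}^1$; the image of $E$ then consists of two sections of $\bar f$, together with possibly some whole fibres. When the two sections are disjoint one obtains, as in the proofs of Lemmas 2.2 and 2.4, a $k^*$-action on $\mathbb{F}_n$ fixing them --- which automatically preserves the image of $E$; blowing back up at the relevant points of the fibres, which are fixed for this action, lifts the $k^*$-action to $M_1$ preserving $E$, hence to $V$, and a $k^*$-invariant affine neighbourhood of $p$ is then a positively graded affine surface with singular point $p$ whose smooth locus has trivial tangent bundle, contradicting Theorem 1.2(1). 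The hard part is to make this rigorous: one must classify the singular fibres compatible with $E\cdot l=2$ and $E$ negative definite, handle (or rule out) the sub-case in which the two boundary sections meet, and verify that the successive contractions and blow-ups are equivariant. I expect this cusp case to be the only genuine difficulty; the elliptic case and the reduction steps are essentially those already carried out in Lemmas 2.2, 2.4 and 3.4.
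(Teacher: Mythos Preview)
Your reduction to the dichotomy ``$E$ is a smooth elliptic curve or a cycle of rational curves'' and your treatment of the elliptic case are essentially the paper's argument (the paper phrases the reduction via $Bk(E)=N$ and Lemma~4.2, but your direct computation from $K_{M_1}+E=0$ and adjunction is equivalent; a small wording issue: once $K_{M_1}+E=0$ every component has $\beta=2$ or is elliptic with $\beta=0$, so no $(-1)$-curve in $E$ can meet three others --- you should drop that hypothesis rather than impose it).

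The genuine gap is the cusp case. Your plan --- contract to a Hirzebruch surface with two boundary sections, build a $k^*$-action, and lift it back --- is not just technically incomplete but conceptually risky: cusp singularities are, in general, \emph{not} quasi-homogeneous (equivalently, their local rings are not positively graded), so one should not expect a $k^*$-action fixing $p$ to exist on any affine neighbourhood. Concretely, the two horizontal components of the loop $E$ are adjacent (or become so after the vertical pieces are collapsed), so the two ``sections'' on $\mathbb{F}_n$ typically meet, and the scalar-M\"obius argument from Lemmas~2.2 and~2.4 breaks down exactly there. You acknowledge this sub-case, but it is not a corner case --- it is the generic situation.

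The paper avoids all of this by a completely different and much shorter device: a cusp singularity admits, by a theorem of Neumann--Wahl, a finite cover ramified only at the singular point which is itself a \emph{hypersurface} cusp. Triviality of the tangent bundle of $V^0$ pulls back along this \'etale-in-codimension-one cover, and Theorem~1.2(2) (the hypersurface/complete-intersection case of Zariski--Lipman) then gives the contradiction. So rather than trying to manufacture a grading, you should pass to a cover that is a complete intersection.
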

By the results in the previous section we know that if $V$ is singular then there is a unique singular point,
which we will denote by $p$, 
of $V$. Let $\ol V \rightarrow V$ be a resolution of the singularity $p$ such that letting $\pi^{-1}(p)=E$ to be the 
exceptional divisor $E$ is SNC.\\

To prove that $V$ is smooth we have to consider the following two cases:\\
\begin{enumerate}
\item $Bk(E)=N$
\item $Bk(E) \neq N$
\end{enumerate}

where $N$ is the negative part of $K+D$.\\
In the case when $Bk(E) \neq N$ there is a (-1)-curve $L$ on $V$, not contained in $E$, such that $L$ meets a unique 
rational twig of $E$ transversally in one point. We contract $L$ and any other (-1) curve that arises in the image of
$\ol V$ as a result and which plays the role of $L$. Continuing this process we are reduced to the case when $Bk(E)=N$.\\
We may thus assume that $Bk(E)=N$.\\

We now state a claim whose proof we postpone till the end of this section.

{\bf Claim:} $E$ is either a smooth elliptic curve or a loop of rational curves.\\

We will show that even the above two possibilities can be ruled out, thus proving that $V$ is smooth.\\

$E$ cannot be an elliptic curve. For otherwise, by Theorem 3.3, a small Euclidean neighborhood of $E$ in $\ol V$ is
biholomorphic to a neighborhood of $E$ in the total space $P$ of the normal bundle of $E$ in $\ol N$. 
Then $P-E$ has trivial tangent bundle. After contracting the zero-section of the normal bundle we get a 
positively graded affine surface
such that its smooth locus has a trivial tangent bundle. This contradicts Theorem 1.2 (1).  

If $E$ is a loop of rational curves then it contracts to a {\it cuspidal singularity}. It was 
proved by Neumann and Wahl, \cite[Proposition 4.1]{neu}, that given a germ of cuspidal singularity $(W,q)$ there exists a 
germ of a hypersurface cuspidal singularity $(\widetilde W,\widetilde Q)$ with a finite map 
$(\widetilde W,\widetilde Q)\rightarrow (W,q)$
which is ramified only at the singular point. Thus we have a hypersurface singularity 
whose smooth locus has trivial tangent bundle. By Theorem 1.2, the surface is smooth. This is a contradiction.

We now finish this section by proving the Claim.

\begin{lemma}
Let $V$ be a normal projective surface. Assume that the tangent bundle of $V^0$
is trivial. Let $\pi: M \rightarrow V$ be a resolution of
singularities such that the exceptional divisor $E$ is SNC. Further assume that
$N = Bk(E)$, where $N$ is the negative part of the Zariski-Fujita
decomposition of $K+E$. Then $Bk(E)= \emptyset$.
\end{lemma}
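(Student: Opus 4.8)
The plan is to argue by contradiction: suppose $Bk(E)\neq\emptyset$, so that $E$ has at least one maximal rational contractible twig or rational club. Since $N=Bk(E)$ and $\overline\kappa(V^0)=0$, the Zariski--Fujita decomposition gives $K+E\approx N=Bk(E)$, which is an honest $\mathbb{Q}$-divisor with all coefficients strictly between $0$ and $1$ (by \cite[\S~6.16]{F} together with the explicit formula for the bark of a twig/club). On the other hand, because the tangent bundle of $V^0$ is trivial the canonical bundle of $V^0$ is trivial, so there is a canonical divisor of $M$ supported entirely on $E$ with \emph{integral} coefficients; call it $K'$. Then $K'+E$ is an integral divisor supported on $E$ numerically equivalent to $Bk(E)$. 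The strategy is to derive a contradiction from the fact that $Bk(E)$ has non-integral coefficients on the tips of its twigs while $K'+E$ is integral.

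To make this work I would first check, as in the proof of the Claim in Section~2 (the $N=0$ argument in Lemma~2.2), that numerical equivalence forces actual equality of divisors supported on $E$. Concretely: since $\overline\kappa(V^0)=0$, by Kawamata's result $P\approx 0$, so $K+E\approx N$. The components of $E$ have negative-definite intersection form (they are exceptional for $\pi$), so the map from $\mathbb{Q}$-divisors supported on $E$ to $(\text{Hom}(\{E_i\},\mathbb{Q}))$ given by intersection numbers is injective; hence a $\mathbb{Q}$-divisor supported on $E$ is determined by its numerical class relative to $E$. Therefore $K'+E$ and $Bk(E)$, both supported on $E$ and numerically equivalent, must be \emph{equal} as $\mathbb{Q}$-divisors. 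But $K'+E$ has integer coefficients and $Bk(E)$ does not (the bark of a nonempty maximal rational twig assigns a coefficient in $(0,1)$ to the tip $C_1$, since $N\cdot C_1=-1$ with $C_1^2\le -2$ forces the coefficient of $C_1$ to be $\le 1$ and the negative-definiteness together with $C_1$ being a tip forces it to be $>0$ and non-integral unless the twig is empty). This contradiction forces $Bk(E)=\emptyset$.

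The main obstacle I anticipate is handling the case of a rational \emph{club} and, more subtly, the possibility that $E$ itself (a connected component) is a single rational normal curve, whose bark is defined as $2(-Y^2)^{-1}Y$ — here I must verify that $2(-Y^2)^{-1}$ is genuinely non-integral, which fails only if $Y^2=-1$ or $Y^2=-2$. The case $Y^2=-1$ is excluded because $E$ is exceptional for a resolution (we may assume no $(-1)$-curves, or rather a $(-1)$-curve meeting nothing else in $E$ would be contractible, contradicting that $\pi$ is a genuine resolution of a non-smooth point combined with the standing reduction). The case $Y^2=-2$ gives bark $=Y$, an integral divisor, so this does not immediately contradict integrality of $K'+E$; but a single $(-2)$-curve contracts to a rational double point, excluded by Theorem~1.2\,(3). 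So the delicate part is really a bookkeeping check that in every configuration where $Bk(E)\neq\emptyset$ either some tip receives a non-integral coefficient, or the connected component in question contracts to a rational singularity, both of which are impossible. I would organize this as a short case analysis over the connected components of $E$ contributing to $Bk(E)$, using the explicit bark formulas and the negative-definiteness of twigs.

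Once the contradiction is established in each case, we conclude $Bk(E)=\emptyset$, which is exactly the assertion of the lemma; combined with $N=Bk(E)$ this yields $N=\emptyset$, and hence $K+E\approx 0$, feeding into the Claim that $E$ is a smooth elliptic curve or a loop of rational curves as already used earlier in Section~4.
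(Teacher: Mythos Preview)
Your proposal is correct and follows essentially the same approach as the paper: from $K+E\approx N$ and the fact that a canonical divisor of $M$ is supported on $E$ with integer coefficients, negative-definiteness of $E$ forces an equality of $\mathbb{Q}$-divisors, contradicting the fact (\cite[\S~6.16]{F}) that the coefficients of $N$ lie strictly in $(0,1)$. The paper's proof is terser and does not spell out the edge cases (a single $(-2)$-curve, rational clubs) that you worry about, but these are implicitly excluded by the standing assumption (Theorem~1.2\,(3)) that $V$ has no rational singularities.
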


\begin{proof}
Since $\ol \kappa(V^0)=0$, the positive part of the Zariski-Fujita
decomposition is zero. Thus
\begin{equation}\label{eqK}
K \approx N - E\, .
\end{equation}
If $N$ is non-zero, then all the
coefficients of $N$ are positive and less than 1. The coefficients of $E$ and
$K$ are integers. The components of $E$ are also the support of $K$
 because the tangent bundle of $V^0$ is trivial. Also, because $E$ is negative
definite, the 
components of $E$ are numerically independent. But the
coefficients on the left hand side of \eqref{eqK}
are integers and on the right hand side of \eqref{eqK} are fractions.
This is a contradiction.
\end{proof}

Lemma 4.1 and Lemma 1.3 imply that
when $Bk(E)=N$, each component of $E$ is either a smooth elliptic curve or a
loop of rational curves. Because for all the other possibilities in Lemma 1.3 $Bk$ is
non-empty.\\ 

\section{Proof of assertions 4, 5 and 6}
By Assertion (2), the surface $V$ can have at most one singularity. Let $p$ be the unique
singular point of $V$. Let $\pi:W \rightarrow V$ be a resolution of the singular
point $p$ such that $E= \pi^{-1}(p)$ is an SNC divisor. Thus $V^0 = W-E$. By
hypothesis, $\ol \kappa (W-E)= -\infty$. Hence we have a $\mathbb{P}^1$-fibration $f: W
\rightarrow C$, \cite[Chapter I, \S~3.13]{M81}, and there exists an $\mathbb{A}^1$-fibration on $V^0$ 
because $\ol \kappa(V^0)= -\infty$. Let $S$ denote the irreducible component of $E$ which is a cross-section to $f$.

We prove the following.
\begin{lemma}
If $C$ is a rational curve then $V$ is smooth.
\end{lemma}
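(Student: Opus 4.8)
The plan is to exploit the $\mathbb{P}^1$-fibration $f:W\to C$ over a rational curve $C$ together with the triviality of the tangent bundle of $V^0=W-E$. Since $\ol\kappa(V^0)=-\infty$, the surface $V^0$ carries an $\mathbb{A}^1$-fibration, and as in the affine case this forces the fibers of $f$ restricted to $V^0$ to be affine lines; in particular $E$ contains a unique horizontal component $S$ which is a cross-section of $f$, while every other component of $E$ is either a feather or lies in a singular fiber. The first step is to normalize the resolution: by successive contractions of superfluous $(-1)$-curves (those meeting the rest of $E$ or the horizontal section in too few points, as permitted by Lemma 1.1 and the fact that such contractions do not affect triviality of $T_{V^0}$), I would arrange that every singular fiber of $f$ is a linear chain containing exactly one component not in $E$, and that this component is a $(-1)$-curve meeting $S$ and the rest of $E$. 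Because $C$ is rational, $W$ is a rational surface, so numerical and rational equivalence coincide; moreover since $T_{V^0}$ is trivial there is a canonical divisor $K_W$ supported on $E$, and after the normalization one can read off the self-intersections along $E$.

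The second step is to contract the chains: contracting in each singular fiber the string of curves not meeting $S$ (starting from the $(-1)$-curve outside $E$) turns $f$ into a $\mathbb{P}^1$-bundle $\ol f:\ol W\to C$ carrying two disjoint sections — the image $\ol S$ of $S$ and a second section $S_\infty$ coming from the other end of $E$ (the tip of the chain opposite to the feather). Here the key point, exactly as in the proofs of Lemma 2.2 and Lemma 2.4, is that two disjoint sections of a $\mathbb{P}^1$-bundle give a trivialization whose transition functions are scalar multiplications, hence a fiberwise $k^*$-action fixing $0$ and $\infty$; this action lifts through the blow-ups that reverse the contraction (the points blown up are intersection points of fiber components, which are $k^*$-fixed), so $W$, and therefore $V$, carries a $k^*$-action. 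One must check that this $k^*$-action on $V$ has the affine quotient structure making the coordinate ring positively graded: since $V^0$ with its $\mathbb{A}^1$-fibration maps to the affine base and $V$ has an isolated singularity fixed by $k^*$, the coordinate ring of $V$ (or of a suitable affine $k^*$-invariant neighborhood if $V$ is projective — but here we may work on $V$ directly since $V^0=W-E$ and the contraction of $E$ recovers $V$) is a graded domain. Then Theorem 1.2~(1) finishes it: a positively graded surface whose smooth locus has trivial tangent bundle is smooth.

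The main obstacle I anticipate is the normalization step — verifying that one really can reduce to the situation where each singular fiber is a linear chain meeting $S$ at one end and with precisely one non-$E$ component, \emph{without} destroying any of the structural hypotheses. The subtlety is that $E$ may a priori have many components distributed among several fibers, and there is no affineness of $V$ to forbid complete curves in $V^0$ as in Section~2; one instead uses that $V^0$ has an $\mathbb{A}^1$-fibration (each smooth fiber is $\mathbb{A}^1$, so meets $E$ in exactly one point, on the section $S$) and that any connected subtree of a singular fiber contracts to a rational double point — which cannot occur on $V$ by Theorem 1.2~(3). So every component of a singular fiber not in $E$ and not equal to the distinguished $(-1)$-curve must itself be part of $E$; feeding this into Lemma 1.1 and Lemma 7.6 of \cite{F} (linearity of the fiber, as used already in Section~2.3) pins down the chain structure. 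A secondary technical point is ensuring the $k^*$-action descends to $V$ rather than merely to $W$: since $E=\pi^{-1}(p)$ is $k^*$-invariant (it is the exceptional set of the unique singularity, which is a fixed point), the contraction $\pi:W\to V$ is equivariant and the action passes to $V$. Once these reductions are in place the argument is the same $k^*$-action-plus-Theorem 1.2 mechanism used repeatedly above.
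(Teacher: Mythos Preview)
Your argument has a genuine gap at the ``two disjoint sections'' step. In the projective setting there is no boundary divisor $D'$: the only divisor at hand is the exceptional set $E$ over the single point $p$, and $E$ is connected. Consequently $E$ contributes exactly \emph{one} horizontal component, the cross-section $S$; every other component of $E$ lies in a fiber and must be connected to $S$ through components of $E$ (else $E$ would be disconnected). So in a singular fiber the picture is $S$--$E_1$--$\cdots$--$E_r$--$L$ with the feather $L$ at the far end, not $L$ sitting between $S$ and the rest of $E$ as you describe (that configuration would force $S$ to meet the fiber twice or disconnect $E$). After any sequence of fiber contractions you are left with a $\mathbb{P}^1$-bundle carrying a single distinguished section $\ol S$; there is no ``tip opposite the feather'' that globalizes to a second section $S_\infty$. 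Without the second section the M\"obius/scalar-transition argument collapses and you do not get a $k^*$-action. This is exactly the structural difference between the affine case of \S2.3 (where $D'$ supplies the second cross-section) and the projective case here.

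There is a secondary problem even if a $k^*$-action were available: $V$ is projective, so Theorem~1.2\,(1), which concerns affine positively graded domains, does not apply directly; you would need to produce an affine $k^*$-invariant neighbourhood of $p$ with the right grading, and nothing in your outline does this.

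The paper's proof takes a completely different route that avoids fibration combinatorics altogether. Since $C$ is rational, $W$ has irregularity $0$; a theorem of Umezu then forces the geometric genus of the (Gorenstein) singularity $p$ to be $1$, so $p$ is minimally elliptic. Okuma's theorem supplies a finite cover $(Y,o')\to(V,p)$ ramified only at $p$ with $(Y,o')$ a complete intersection; the cover is \'etale on the smooth loci, so $T_{Y\setminus o'}$ is trivial, and Theorem~1.2\,(2) for complete intersections gives the contradiction.
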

\begin{proof}
If $g(C)=0$, then the irregularity $q$ of $W$ is 0. Let $P_g$ denote the geometric genus of the singularity $p$. 
By a theorem of Umezu, \cite[Theorem 1]{U}, $P_g(p)=1$. The singularity $p$ is Gorenstein, hence $p$ is minimally 
elliptic. It is proved by Okuma, \cite[Theorem 4.3]{O}, that if $(X,o)$ is a minimally elliptic singularity, then 
there exists a finite cover $(Y,o') \rightarrow (X,o)$ ramified precisely at the singularity $o$ such that $(Y,o)$ 
is a complete intersection singularity. Because the covering is unramified outside $o$, the tangent bundle of 
$Y-o$ is also trivial. The conjecture is known to be true for complete intersections (Theorem 1.2). 
This completes the proof.
\end{proof}

\nind
{\bf The case $g(C)\geq 2$}.\\

Now assume that $g(C) \geq 2$. Let $F$ be a singular fiber of the fibration $f$. Assume that $F$ has a feather 
with multiplicity 1. Such a feather is called a reduced feather. By the negativity of the canonical divisor and 
the adjunction formula we see that every feather is a (-1) curve. On such a singular fiber with a reduced feather 
we perform a \emph{contraction process} as follows: Because the feather is reduced there is another (-1) curve 
in $F$. We contract this (-1) curve distinct from the reduced 
feather and continue contracting any (-1) curves that arise till we are left with only the feather as the 
unique irreducible component of the fiber. Note that when a $(-1)$-curve different from the reduced feather is
contracted, we are in effect removing from $V^0$ some curve. Clearly this open subset of $V^0$ also has a trivial
tangent bundle.\\

We consider two cases:
\begin{enumerate}
\item Every singular fiber has a reduced feather.
\item There is a singular fiber $F$ such that all feathers of $F$ are multiple.
\end{enumerate} 

We first consider the first case.\\

We perform the \emph{contraction process} on every singular fiber keeping the reduced feather till the end and 
denote the new ruled surface by the same symbols: $f: W \rightarrow C$. As every feather lies outside the support of the 
canonical divisor, the canonical divisor $K_W$ is now supported on the section $S$ of the ruling. By the 
general theory of ruled surfaces $K_W ^2 = 8(1-g(C))$. Hence $S$ has negative self-intersection. Since the complement 
of $S$ has trivial tangent bundle by the above remark, we only have to prove that $V$ is smooth when $E=S$.

\begin{lemma}
If $E=S$, then $V$ is smooth.
\end{lemma}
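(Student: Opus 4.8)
The plan is to show directly that the tangent bundle of $V^0 = W\setminus S$ cannot be trivial -- indeed cannot even be generated by its global sections -- as soon as $S$ has negative self-intersection, which forces $V$ to be smooth. The point is that after the contraction process $f\colon W\to C$ is a geometrically ruled surface over a curve of genus $\geq 2$ and $V^0$ is the complement of the section $S$. Writing $W=\mathbb{P}(\mathcal{E})$ for a rank $2$ bundle $\mathcal{E}$ on $C$, the section $S$ corresponds to an exact sequence $0\to\mathcal{M}\to\mathcal{E}\to\mathcal{Q}\to 0$ of line bundles with $S^2=\deg\mathcal{Q}-\deg\mathcal{M}$, and since $S^2<0$ the line bundle $\mathcal{N}:=\mathcal{M}\otimes\mathcal{Q}^{-1}$ has positive degree $-S^2$. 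Then $W\setminus S$ is an $\mathbb{A}^1$-bundle over $C$, namely a torsor under $\mathcal{N}$, so the restriction $f\colon V^0\to C$ is a smooth morphism with relative tangent bundle $T_{V^0/C}\cong f^*\mathcal{N}$, and we have the exact sequence $0\to f^*\mathcal{N}\to T_{V^0}\to f^*T_C\to 0$.

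The first step is the vanishing $H^0(V^0,f^*T_C)=0$. For this I would use that $f_*\mathcal{O}_{V^0}$ carries an exhaustive increasing filtration whose successive graded pieces are $\mathcal{N}^{-n}$, $n\geq 0$. Hence for any line bundle $L$ on $C$ the sheaf $L\otimes f_*\mathcal{O}_{V^0}$ is filtered with graded pieces $L\otimes\mathcal{N}^{-n}$, so $H^0(V^0,f^*L)=H^0(C,L\otimes f_*\mathcal{O}_{V^0})$ vanishes whenever $\deg(L\otimes\mathcal{N}^{-n})<0$ for all $n\geq 0$. Taking $L=T_C$ and using $\deg T_C=2-2g(C)<0$ -- this is exactly where the hypothesis $g(C)\geq 2$ is essential -- together with $\deg\mathcal{N}>0$, every one of those degrees is negative, so $H^0(V^0,f^*T_C)=0$.

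The second step is to feed this into the long exact cohomology sequence of $0\to f^*\mathcal{N}\to T_{V^0}\to f^*T_C\to 0$: it shows that $H^0(V^0,f^*\mathcal{N})\to H^0(V^0,T_{V^0})$ is an isomorphism, i.e.\ every global vector field on $V^0$ is a section of the sub-line-bundle $f^*\mathcal{N}\subset T_{V^0}$. Consequently any two global sections of $T_{V^0}$ lie, at each point of $V^0$, in a common one-dimensional subspace of the tangent space, so their wedge vanishes identically; hence $T_{V^0}$ is not globally generated, and in particular not trivial. This contradicts the standing hypothesis, so the case $E=S$ does not occur and $V$ is smooth.

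I expect the main obstacle to be the first step: identifying $W\setminus S$ with a line-bundle torsor over $C$ of the correct degree, and thereby describing $f_*\mathcal{O}_{V^0}$ precisely enough to obtain the vanishing of $H^0(V^0,f^*T_C)$; once that is in place the remaining deduction is formal. It is worth checking, as a sanity test, that the argument genuinely breaks for $g(C)\leq 1$, as it must: when $g(C)=1$ one has $T_C\cong\mathcal{O}_C$, so $H^0(V^0,f^*T_C)\neq 0$ and the dichotomy above collapses.
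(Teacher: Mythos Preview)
Your argument is correct and reaches the same endpoint as the paper --- every global vector field on $V^0$ is tangent to the fibres of $f$, hence $T_{V^0}$ cannot be globally generated --- but the route is genuinely different. The paper first uses the triviality of $K_{W_0}$ to identify $K_C$ with $L\otimes Q^{-1}$ (your $\mathcal N$), then computes $f_*T_f\cong\mathrm{Sym}^2(V)\otimes(L\otimes Q)^{-1}$ on the \emph{projective} surface $W$ and embeds $H^0(C,K_C)$ into $H^0(W,T_f)$, obtaining $\dim H^0(W,T_f)\geq g$. Separately it contracts $E$ (Grauert) and uses Hartogs to show $W_0$ has no nonconstant regular functions, whence $\dim H^0(W_0,T_{W_0})=2$; a dimension count then forces $H^0(W_0,T_f|_{W_0})=H^0(W_0,T_{W_0})$. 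Your approach bypasses all of this: you work entirely on the open piece $V^0$, identify it as an $\mathcal N$-torsor, read off $T_{V^0/C}\cong f^*\mathcal N$ and the filtration of $f_*\mathcal O_{V^0}$ by $\mathcal N^{-n}$, and kill $H^0(V^0,f^*T_C)$ by a straight degree count using only $\deg T_C<0$ and $\deg\mathcal N>0$. This is more elementary --- no contraction, no Hartogs, no use of the specific relation $K_C\cong\mathcal N$ --- and in fact proves the slightly stronger statement that the complement of \emph{any} negative section of a ruled surface over a curve of genus $\geq 2$ has non--globally-generated tangent bundle, without assuming the canonical bundle of the complement is trivial. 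The paper's computation, on the other hand, yields the quantitative bound $\dim H^0(W,T_f)\geq g$ on the compact surface, which may be of independent interest. Your closing sanity check for $g(C)=1$ correctly isolates where both arguments break, matching the paper's remark about the elliptic base case.
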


{\bf Remark.} The proof below works even when genus $C=1$ if $S^2<0$ In the following arguments we sometimes assume that
the field $k=\C$. An application of Lefschetz Principle will enable us to deduce the result for an algebraically 
closed field $k$ of char. $0$.\\
 
\subsection{Proof of Lemma 5.2}

Let $C$ be an irreducible smooth projective curve of genus $g$,
with $g\, \geq\,1$. Let $V$ be a vector bundle of rank two over $C$. Let
$$
f\, :\, W\,:=\, {\mathbb P}(V)\,\longrightarrow\, C
$$
be the ${\mathbb P}^1$--bundle parametrizing lines in the fibers
of $V$. Let
\begin{equation}\label{f2}
T_f\, \subset\, TW
\end{equation}
be the line subbundle given by the kernel of the differential
$df\, :\, T_W\,\longrightarrow\, f^*T_C$.

Line subbundles of $V$ are in bijection with the sections of $f$.

We will prove the result in several steps.\\

{\bf Step 1.}\\

Let $L\,\subset\, V$ be a line subbundle, and let
$$
\sigma\, :\, C\, \longrightarrow\, W
$$
be the section corresponding to $L$. Let $E:=\sigma(C)$ be the image. Let $Q:=V/L$.
The normal bundle $N_{E/W}\cong {\mathcal O}(E)|_E\cong L^*\otimes Q$.
In particular,
\begin{equation}\label{sigC}
(\sigma(C))^2 \, =\,
{\rm degree}(L^*\otimes Q)\, .
\end{equation}
This is proved in (\cite{Ma}, Proof of Lemma 1.15).

We assume that there is a section
$$
\sigma\ :\, C\,\longrightarrow\, W
$$
of $f$ such that
\begin{equation}\label{f1}
(\sigma(C))^2\, <\, 0\, .
\end{equation}
Let
\begin{equation}\label{e0}
E\,:=\, \sigma(C)\, \subset\, W
\end{equation}
be the divisor in \eqref{f1}. Note that $E$ is identified with $C$ using the map $\sigma$.

Let
$$
L\, \subset\, V
$$
be the line subbundle corresponding to the section $\sigma$ in \eqref{f1}. We have a short
exact sequence of
vector bundles
\begin{equation}\label{e2}
0\,\longrightarrow\, L\,\longrightarrow\,  V\,\longrightarrow\, Q\,:=\,
V/L \,\longrightarrow\, 0\, .
\end{equation}

{}From \eqref{sigC} and \eqref{f1} we have  $degree~L> degree\, Q$.\\

{\bf Step 2.}\\

There is an isomorphism $T_f|{E}\cong N$.

To see this, we observe that $\bigwedge^2~ T_{W}|_{E}\cong T_E\otimes N$.
Similarly, $\bigwedge^2~T_W|{E}\cong T_f\otimes T_E$. Hence $T_f|{E}\cong N$. 

Let $K_W$ be the canonical line bundle of $W$.
The complement $W\setminus E$ will be denoted by $W_0$.\\

{\bf Step 3.}\\

Assume that the restriction of $K_W$ to $W_0$ is algebraically trivial. Then
$$
T_E\cong L^*\otimes Q\cong N$$

A canonical divisor of $W$ is supported on $E$. Since $K_W.F=-2$ for a fiber $F$ of $f$ we get
$K_W\sim -2E$. By adjunction formula, $K_E\cong (K_W\otimes {\mathcal O}(E))_E$. Thus, 
$K_E\cong {\mathcal O}(-E)|_E$. By taking the dual we get the result by Step 1.

\begin{remark}\label{rem1}
{\rm Assume that the restriction of $K_W$ to $W_0$ is trivial.
Since $T_C\,=\, L^*\otimes Q$ by Step 1 it
follows that ${\rm degree}(T_C)\, <\, 0$. Consequently, $g\, \geq\, 2$.}
\end{remark}

{\bf Step 4.}\\

Assume that $E^2<0$. Then there are no nonconstant regular
functions on $W_0$.

For this we assume that $k=\C$. By Grauert's theorem $E$ can be contracted to a normal singular point $p$ on a 
compact complex surface $W'$. Then $W'-p$ is biholomorphic to $W_0$. Any regular function on $W_0$ extends to $W'$ 
by Hartog's theorem, hence it has to be a constant.\\

\nind
{\bf Step 5.}\\

This is the most crucial step in the proof.

For the line bundle $T_f$ 
\begin{equation}\label{eqinq}
\dim H^0(W,\, T_f)\, \geq\, g\, \geq\, 2\, .
\end{equation}

Consider the restrictions of $T_f$ and ${\mathcal O}_W(2)$
to any fiber of $f$. Both these restrictions are of degree two. Therefore, by the
see--saw theorem, there is a line bundle $\zeta$ on $C$ such that
$$
T_f\,=\, {\mathcal O}_W(2)\otimes f^*\zeta\, .
$$
We noted in Step 2 that
$(T_f)|_{E}\,=\, L^*\otimes Q$. The restriction of ${\mathcal O}_W(2)$
to $E$ is $Q^{\otimes 2}$ after we identify $C$ with $E$ \cite[Chapter V, Proposition 2.6]{Ha}, 
Therefore, $\zeta\, =\, L^*\otimes Q^*$. So,
\begin{equation}\label{f3}
T_f\,=\, {\mathcal O}_W(2)\otimes f^*(L\otimes Q)^*\, .
\end{equation}
By the projection formula,
$$
f_*T_f\,=\, f_*({\mathcal O}_W(2)\otimes (L\otimes Q)^*)\,=\,
\text{Sym}^2(V)\otimes (L\otimes Q)^*\, .
$$
The line bundle $L^2$ is a subbundle of $\text{Sym}^2(V)$. Therefore,
$$
K_C\,=\, L\otimes Q^*\,=\, L^2\otimes (L\otimes Q)^*\, \subset\,
\text{Sym}^2(V)\otimes (L\otimes Q)^*\,=\, f_*T_f\, ,
$$
where $K_C$ is the canonical line bundle of $C$. This implies that
$$
H^0(C,\, K_C)\, \subset\, H^0(C,\, f_*T_f)\,=\, H^0(W,\, T_f)\, .
$$
But $\dim H^0(C,\, K_C)\, =\, g\, \geq\, 2$ (see, Remark after Step 2).
This completes the proof of \eqref{eqinq}.\\

{\bf Step 6.}\\

The tangent bundle $T_{W_0}$ is not trivial.

Assume that $T_{W_0}$ is trivial. Then the canonical line bundle
$K_{W_0}$ is also trivial. Hence $T_C\,=\, L^*\otimes Q$ by Step 3, and
also we have $g\, \geq\, 2$.

Since $T_{W_0}$ is the trivial vector bundle, it is generated
by its global sections. Since the rank of $T_{W_0}$ is two, from Step 4 we
know that
\begin{equation}\label{g1}
\dim H^0(W_0,\, TW_0)\,=\, 2\, .
\end{equation}
On the other hand, from Step 4 we have 
\begin{equation}\label{g2}
\dim H^0(W_0,\, (T_f)|_{W_0}) \,\geq\, \dim H^0(W,\, T_f)\, \geq\, 2\, .
\end{equation}
Since $(T_f)|_{W_0} \subset\, T_{W_0}$, we have
$H^0(W_0,\, (T_f)|_{W_0})\, \subset\, H^0(W_0,\,T|{W_0})$. Therefore, from
we conclude that
$$
H^0(W_0,\, (T_f)|_{W_0})\,=\, H^0(W_0,\, T|{W_0})\, .
$$
But this contradicts the earlier observation that $T|_{W_0}$ is generated by its
global section. In view of this contraction, the proof is complete for Case 1.\\

\nind
{\bf Remark.} Suppose that $C$ is an elliptic curve and $V$ an indecomposable rank $2$ vector bundle over 
$C$ with invariant $e=0$. Let $f:W:={\mathbb P}(V)\rightarrow C$ be the corresponding ${\mathbb P}^1$-bundle. There is a 
cross-section $E$ of $f$ such that $E^2=0$. It can be shown that the tangent bundle of the complement $W-E$ is 
trivial, $W-E$ has no non-constant regular functions, and $H^1(W-E,\Omega^1)=(0)$. For the last assertion, 
see \cite{MK}.\\  

Finally, we deal with case 2.\\

Now there is a singular fiber $F$ of $f$ such that all the feathers of the 
fiber are multiple. Let $F_1, F_2, \cdots , F_n$ be singular fibers such that no feather is reduced. Now 
contract all but one multiple feather in each $F_i$ so that the multiple feather is the only (-1) curve 
in $F_i$. Let $m_1, m_2,\cdots , m_3$ be the multiplicities of the unique feather of each singular fiber. By the 
solution of Fenchel's Conjecture due to Nielsen-Bundagaard and R. Fox \cite{Fo} (see also, \cite{C}), 
there exists a curve $C'$ and a Galois map 
$g: C' \rightarrow C$ which is ramified precisely at $f(F_i)$, $1 \leq i \leq n$ with ramification index $m_i$. The 
normalized fiber product $W \times_C C'$ is again a $\mathbb{P}^1$-fibration such that each singular fiber has at 
least one reduced feather. We are now reduced to Case 1.\\

\nind
Assertion (6) of Theorem 0.1 can be proved in a similar way. Since $C$ is an elliptic curve, we can construct a finite
Galois cover $C'\to C$ with prescribed ramification over only the points such that corresponding fiber has only 
non-reduced feathers and take the fiber product $W\times_CC'$ (which will then be a finite etale cover of $W$), etc.\\

\vspace{5mm}
Indranil Biswas, School of Mathematics, Tata Institute of Fundamental Research, Homi-Bhabha Road,
Mumbai 400005, India.\\
E-mail: indranil@math.tifr.res.in\\

R.V. Gurjar, School of Mathematics, Tata Institute of Fundamental Research, Homi-Bhabha Road,
Mumbai 400005, India.\\
E-mail: gurjar@math.tifr.res.in\\

\nind
Sagar U. Kolte,  School of Mathematics, Tata Institute of Fundamental Research, Homi-Bhabha Road,
Mumbai 400005, India.\\
E-mail: sagar@math.tifr.res.in\\

Current address: School of Mathematics, Korea Institute for Advanced Study, Seoul 130-722, Korea.\\
sagar.kolte@gmail.com\\

\end{document}